\newtheorem{thm}{Theorem}
\newtheorem{lemma}[thm]{Lemma}
\numberwithin{equation}{section}
\newcommand{\Comment}[1]{}
\newcommand{\rbr}[1]{\left( {#1} \right)}
\newcommand{\cbr}[1]{\left\{ {#1} \right\}}
\newcommand{\abs}[1]{\left| {#1} \right|}
\newcommand{\eq}[1]{(\ref{#1})}
\def\one{\mathbf{1}}
\def\CC{\mathbb{C}}
\def\RR{\mathbb{R}}
\def\NN{\mathbb{N}}
\def\supp{\text{supp}}
\newcommand{\classno}[2][2010]{%
  \let\@oldtitle\@title%
  \gdef\@title{\@oldtitle\footnotetext{#1 \emph{Mathematics Subject Classification} #2}}%
}
\newcommand{\extraline}[1]{%
  \let\@@oldtitle\@title%
  \gdef\@title{\@@oldtitle\footnotetext{ #1}}%
}
\begin{document}

\title{Sharpness of the Mockenhaupt-Mitsis-Bak-Seeger Restriction Theorem in Higher Dimensions}

\author{Kyle Hambrook and Izabella {\L}aba}

\classno{42A38, 42B10, 42B15, 42B20, 28A80}


\extraline{The authors were supported by the NSERC Discovery Grant 22R80520 and an NSERC Postdoctoral
Fellowship.}


\maketitle

\begin{abstract}
We prove the range of exponents in the general $L^2$ Fourier restriction theorem due to Mockenhaupt, Mitsis, Bak and Seeger is sharp for a large class of measures on $\mathbb{R}^d$. This extends to higher dimensions the sharpness result of Hambrook and {\L}aba.
\end{abstract}

\section{Introduction}

If $f: \RR^d \rightarrow \CC$ is Lebesgue integrable, then the Fourier transform of $f$ is
$$
\widehat{f}(\xi) = \mathcal{F}[f](\xi) = \int e^{-2\pi i x \cdot \xi} f(x)  dx \qquad \forall \xi \in \RR^d.
$$
If $\mu$ is a measure on $\RR^d$ and $f: \RR^d \rightarrow \CC$ is $\mu$-integrable, then the Fourier-Stieltjes transform of the measure $f \mu$ is
$$
\widehat{f\mu}(\xi) = \mathcal{F}[f\mu](\xi) = \int e^{-2\pi i x \cdot \xi} f(x)  d\mu(x) \qquad \forall \xi \in \RR^d.
$$
The expression $X \lesssim Y$ stands for ``there exists a constant $C > 0$ such that $X \leq C Y$.'' The expression $X \gtrsim Y$ is analogous, and $X \approx Y$ means that $X \lesssim Y$ and $X \gtrsim Y$. 

The purpose of this paper is to address the sharpness of the range of exponents in the following general $L^2$ restriction theorem.

\begin{thm}\label{mock-mit theorem}
Suppose that $\mu$ is a finite Borel measure on $\RR^d$. Suppose that there are $\alpha, \beta \in (0,d)$ such that 
\begin{align}
\label{A}
\mu(B(x,r)) &\lesssim r^{\alpha} \quad \forall x \in \RR^d, r > 0, \\
\label{B}
|\widehat{\mu}(\xi)| &\lesssim (1+|\xi|)^{-\beta/2} \quad \forall \xi \in \RR^d.
\end{align}
Then for all $p \geq (4d-4\alpha+2\beta)/\beta$ we have 
\begin{align}\label{res-ineq}
\| \widehat{f d\mu} \|_{p} \lesssim  \| f \|_{L^2(\mu)} \qquad \forall f \in L^2(\mu).
\end{align}
\end{thm}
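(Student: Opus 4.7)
The plan is to combine the $TT^*$ method with a Littlewood--Paley decomposition of $\widehat\mu$. Let $T: L^2(\mu) \to L^p(\RR^d)$ be the extension operator $Tf = \widehat{fd\mu}$. A Fubini computation identifies the adjoint essentially as Fourier restriction to $\supp(\mu)$ and gives $TT^* g = g * \widehat\mu$ up to a reflection, so \eq{res-ineq} is equivalent to the convolution inequality
\begin{align*}
\| g * \widehat\mu \|_{L^p(\RR^d)} \lesssim \| g \|_{L^{p'}(\RR^d)} \qquad \forall g \in L^{p'}(\RR^d).
\end{align*}

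I would then decompose $\widehat\mu$ dyadically in frequency: fix $1 = \sum_{j \geq 0} \phi_j$ with $\phi_j$ supported in $\{|\xi| \approx 2^j\}$ for $j \geq 1$ and $\phi_0$ in the unit ball, and set $K_j := \widehat\mu \cdot \phi_j$. For each $j$ I would prove the pair of endpoint bounds
\begin{align*}
\|g * K_j\|_\infty &\leq \|K_j\|_\infty \|g\|_1 \lesssim 2^{-j\beta/2} \|g\|_1, \\
\|g * K_j\|_2 &\leq \|\widehat{K_j}\|_\infty \|g\|_2 \lesssim 2^{j(d-\alpha)} \|g\|_2.
\end{align*}
The first is immediate from the Fourier decay hypothesis \eq{B} and the frequency localization of $\phi_j$. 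The second follows from Plancherel together with the identity $\widehat{K_j}(x) = \int \widehat{\phi_j}(x+y)\, d\mu(y)$: since $\widehat{\phi_j}$ is concentrated on scale $2^{-j}$ with Schwartz tails, a dyadic shell decomposition around $-x$ combined with the ball condition \eq{A} produces the claimed bound $\|\widehat{K_j}\|_\infty \lesssim 2^{j(d-\alpha)}$.

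Riesz--Thorin interpolation at $\theta = 2/p$ then gives
\begin{align*}
\|g * K_j\|_p \lesssim 2^{j\gamma(p)} \|g\|_{p'},
\qquad
\gamma(p) = -\frac{\beta}{2}\Bigl(1-\frac{2}{p}\Bigr) + \frac{2(d-\alpha)}{p},
\end{align*}
and a direct algebraic check shows $\gamma(p) < 0$ exactly for $p > (4d-4\alpha+2\beta)/\beta$, so summing the geometric series over $j$ yields \eq{res-ineq} on this open range. The main obstacle is the endpoint $p = (4d-4\alpha+2\beta)/\beta$, where $\gamma(p) = 0$ and the naive dyadic sum diverges logarithmically. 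To close it I would follow Bak and Seeger: upgrade the two endpoint bounds on each $K_j$ to restricted weak-type Lorentz estimates and combine them through a real-method / Bourgain-type summation that exploits a small gain between adjacent dyadic scales, producing the sharp Lorentz estimate $L^{p',p} \to L^p$ at the borderline exponent. This endpoint refinement is the subtlest ingredient and is exactly what elevates the open range to the closed range stated in the theorem.
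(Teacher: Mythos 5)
This theorem is not proved in the paper at all: it is quoted as background, with the open range $p > (4d-4\alpha+2\beta)/\beta$ attributed to Mockenhaupt and Mitsis and the endpoint to Bak and Seeger. So there is no in-paper argument to compare yours against; I can only assess your sketch on its own terms.

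For the open range your outline is the standard and correct argument. The $TT^*$ reduction to $\| g * \widehat\mu \|_{p} \lesssim \|g\|_{p'}$, the dyadic splitting $K_j = \widehat\mu\,\phi_j$, the $L^1\to L^\infty$ bound $\|K_j\|_\infty \lesssim 2^{-j\beta/2}$ from \eqref{B}, and the $L^2\to L^2$ bound via $\widehat{K_j}(x)=\int\widehat{\phi_j}(x+y)\,d\mu(y)$ together with a dyadic-shell application of \eqref{A} (using the rapid decay of $\widehat{\phi_j}$ beyond scale $2^{-j}$, with decay order $N>\alpha$) are all sound, and your interpolation exponent $\gamma(p)$ and the algebra showing $\gamma(p)<0$ iff $p>(4d-4\alpha+2\beta)/\beta$ check out. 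Note that on this range $p>2$ automatically, so $\theta=2/p\in(0,1)$ and Riesz--Thorin applies.

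The one genuine gap is the endpoint $p=(4d-4\alpha+2\beta)/\beta$, which the statement includes. You correctly diagnose that the dyadic sum diverges logarithmically there and correctly name the remedy (restricted weak-type/Lorentz refinements of the two endpoint bounds combined with a Bourgain-type real-interpolation summation, as in Bak--Seeger), but you do not carry it out, and it is not a routine step: one must actually produce the $L^{p',2}\to L^p$ (or similar Lorentz) improvement and verify the summation lemma applies. As a proof of the theorem as stated, your write-up establishes only the strict inequality range and defers the endpoint to a citation --- which is exactly what the paper under review also does, so this is an acceptable resolution provided you state explicitly that the endpoint is being quoted from Bak--Seeger rather than proved.
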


Theorem \ref{mock-mit theorem} was proved independently by Mockenhaupt \cite{mock} and Mitsis \cite{mit} in the non-endpoint range
$p > (4d-4\alpha+2\beta)/\beta$; the endpoint case was established later by Bak and Seeger \cite{BS}.

Theorem \ref{mock-mit theorem} generalizes the classical Stein-Tomas restriction theorem for the sphere and its variants for other smooth submanifolds of $\RR^d$ for $d \geq 2$ (cf. \cite{stein-ha} for a discussion of such theorems). For example, the Stein-Tomas restriction theorem for the sphere says that \eq{res-ineq} holds for $p \geq (2d+2)/(d-1)$ when $\mu$ is a uniform measure on a sphere in $\RR^d$. Theorem \ref{mock-mit theorem} covers this case, with $\alpha = \beta = d-1$.
However, Theorem \ref{mock-mit theorem} is not limited to the classical setting of surface measures on smooth manifolds. It also applies (for instance) to measures on fractals in $\RR^d$, including when $d=1$. 



The range $p \geq (2d+2)/(d-1)$ in the classical Stein-Tomas restriction theorem for the sphere is sharp. This is illustrated by the so-called Knapp example, where $f$ is taken to be the indicator function of a small (hence almost flat) spherical cap. Similar examples can be constructed for other smooth manifolds.

For fractal measures, the question of sharpness of the range of $p$ in Theorem \ref{mock-mit theorem} remained open until recently. In this setting, it turns out that the availability of restriction estimates beyond that range depends on the particulars of the measure under consideration.

On one hand, the range of $p$ in Theorem \ref{mock-mit theorem} is sharp for the class of measures on $\RR$ that satisfy \eq{A} and \eq{B} with $0 < \beta \leq \alpha < 1$.
This is due to Hambrook and {\L}aba \cite{HL}  (cf. \cite{hambrook-thesis}), who proved a variant of Theorem \ref{HL thm} below 
where 
the measure $\mu$ depends on $p$. The main theorem of \cite{HL} addresses the case of Salem measures, where $\beta$ and $\alpha$ can be taken arbitrarily close together,
but it is easy to adapt the same argument to allow more general $0<\beta<\alpha<1$, see \cite{hambrook-thesis}. Chen \cite{chen} modified the argument of \cite{HL} to obtain the version stated here.
\begin{thm}\label{HL thm}
Given any $0 < \beta \leq \alpha < 1$, there is a compactly supported probability measure $\mu$ on $\RR$ that satisfies \eq{A} and \eq{B} but does not satisfy \eq{res-ineq} for any $p < (4-4\alpha+2\beta)/\beta$. In particular, there is a sequence of functions $f_l \in L^2(\mu)$ such that 
\begin{align*}
\lim_{l \rightarrow \infty} \dfrac{ \| \widehat{f_l d\mu} \|_{p} }{ \| f_l \|_{L^2(\mu)} } = \infty \quad \forall p < (4-4\alpha+2\beta)/\beta.
\end{align*}
\end{thm}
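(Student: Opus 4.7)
The plan is to construct $\mu$ as a random Cantor-type measure on $[0,1]$ into which arithmetic progressions are embedded at a sequence of scales, following and adapting the scheme of Hambrook and {\L}aba \cite{HL}, with the variant of Chen \cite{chen} handling the range $\beta \leq \alpha$. Fix a rapidly increasing sequence of integer scales $N_1 < N_2 < \cdots$. At stage $l$, each interval surviving from stage $l-1$ is subdivided into $N_l$ equal subintervals of length $R_l^{-1}$, where $R_l = N_1 N_2 \cdots N_l$. A subset of $\sim N_l^{\alpha}$ of these subintervals is then selected according to a rule explained below, and the surviving intervals form the stage-$l$ set $E_l$. The natural limit measure $\mu$ on $\bigcap_l E_l$ satisfies the ball condition \eq{A} with exponent $\alpha$ by a direct counting argument.

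The selection rule at each stage is a combination of two ingredients. The first is a random Salem-type choice in which each subinterval is kept with probability $\sim N_l^{\alpha - 1}$ (with a correction fixing the exact cardinality); a standard concentration/Salem analysis of the Fourier coefficients of the resulting random weights yields, almost surely, the Fourier decay bound $|\hat{\mu}(\xi)| \lesssim (1+|\xi|)^{-\beta/2}$ of \eq{B}. The exponent $\beta \in (0, \alpha]$ can be prescribed by appropriately dampening the randomization. The second ingredient is a deterministic insertion, at a designated parent interval at each stage $l$, of an arithmetic progression of $k_l$ subintervals with common spacing $D_l$; the parameters $k_l$ and $D_l$ are chosen so that the Dirichlet-type peaks this progression contributes to $\hat{\mu}$ are dominated by the envelope $|\xi|^{-\beta/2}$, leaving \eq{B} intact.

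For sharpness, let $f_l$ be the $L^2(\mu)$-normalized indicator of the arithmetic progression embedded at stage $l$. Because $\widehat{f_l d\mu}$ is essentially the Fourier transform of a $k_l$-term arithmetic progression weighted by the masses of its component basic intervals, it exhibits peaks of magnitude $\sim k_l \mu(I_l)$ on a dual arithmetic progression in the frequency domain. Computing the $L^p$ norm from these peaks and comparing it to $\|f_l\|_{L^2(\mu)}$ shows, after bookkeeping of the exponents in terms of $k_l$, $D_l$, $R_l$, $\alpha$, and $\beta$, that the ratio $\|\widehat{f_l d\mu}\|_p / \|f_l\|_{L^2(\mu)}$ grows like a positive power of $R_l$ precisely when $p < (4 - 4\alpha + 2\beta)/\beta$. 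Letting $l \to \infty$ gives the claimed divergence.

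The principal obstacle in executing this plan is the compatibility of the arithmetic progressions with the Fourier decay \eq{B}: each AP at stage $l$ creates resonant frequencies at multiples of $1/D_l$, and the resulting peaks of $\hat{\mu}$ must be absorbed by the envelope $|\xi|^{-\beta/2}$ while the random ingredient continues to do its work on the remaining frequencies. Achieving this balance — so that the parameters simultaneously realize the correct $\alpha$, the correct $\beta$, and the required blow-up of the restriction ratio at the critical threshold — is the delicate calibration carried out in \cite{HL, hambrook-thesis, chen}, which we adapt here to the higher-dimensional setting.
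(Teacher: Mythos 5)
Your outline of the construction itself matches the one this paper imports from \cite{HL,chen}: a random Cantor iteration calibrated so that \eq{A} holds with exponent $\alpha$ and a Salem-type argument gives \eq{B} with exponent $\beta/2$, with a deterministic arithmetic structure embedded whose size is the largest compatible with the decay (this is condition \eq{sh-eq-6}, $S_j/T_j\approx N_j^{-\beta_0/2}$). One correction there: the structured set is not ``an arithmetic progression inserted at a designated parent interval at each stage,'' but a self-similar \emph{multi-scale} progression, $P_{j+1}=\bigcup_{a\in P_j}\bigl(a+N_{j+1}^{-1}\{1,3,\dots,2s_{j+1}-1\}\bigr)$, so that $|P_l|=S_l=s_1\cdots s_l$. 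If you only insert one progression per stage, $f_l$ is supported on at most $s_l$ cells and both its mass and its additive energy are far too small for the exponent count to reach the threshold $(4-4\alpha+2\beta)/\beta$.

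The genuine gap is in the lower bound on $\|\widehat{f_l\,d\mu}\|_p$. Your ``peaks of magnitude $\sim k_l\mu(I_l)$ on a dual arithmetic progression'' heuristic is not how the proof goes and would be hard to execute: the restrictions of $\mu$ to distinct cells $[a,a+N_l^{-1}]$, $a\in P_l$, are \emph{different} random Cantor measures, so $\widehat{f_l\,d\mu}$ does not factor as a Dirichlet kernel times a fixed single-cell profile, and for a multi-scale progression the dual peak structure is itself a generalized progression whose peak heights and widths vary from scale to scale. The actual argument (carried out in \cite{HL,chen}, and mirrored in Sections \ref{lower bound}--\ref{proof of claim} of this paper in the radial setting) avoids identifying peaks altogether: take $p=2r$ an even integer with $r\beta$ large, localize to the frequency box $|\xi|\lesssim \eta N_l$ on which the internal phases $e^{-2\pi i\xi(x-a)}$, $x\in\supp\psi_a$, are within $O(\eta)$ of $1$; expand $\|\widehat{f_l\,d\mu}\|_{2r}^{2r}$ as a sum over $(a_1,\dots,a_{2r})\in P_l^{2r}$; prove that \emph{every} term is nonnegative (this is the convolution-positivity claim, which uses $r\beta>d$ to produce a continuous nonnegative density for the $2r$-fold convolution and is entirely absent from your proposal --- without it the off-diagonal terms could cancel the diagonal); discard all but the diagonal tuples $\sum_{i\le r}a_i=\sum_{i>r}a_i$; and bound their number below by $(2r)^{-l}S_l^{2r-1}$ via Cauchy--Schwarz and the key estimate $|P_l^{\oplus r}|\le (2r)^l S_l$, which is exactly where the multi-scale AP structure enters. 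One then passes to general $p\le 2r$ by $\|\widehat{f_l\,d\mu}\|_{2r}^{2r}\le(\int f_l\,d\mu)^{2r-p}\|\widehat{f_l\,d\mu}\|_p^p$. Similarly, the compatibility of the embedded progressions with \eq{B}, which you correctly identify as the principal obstacle, is only deferred to the references rather than argued; as written, the proposal records the strategy but not a proof of either of the two decisive steps.
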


On the other hand, there exist specific fractal measures on $\RR^d$ obeying \eq{A} and \eq{B} for which the estimate (\ref{res-ineq}) holds for 
a better range of $p$ than that provided in Theorem 1. 
Such measures were constructed by Shmerkin and Suomala \cite{shmerkin-suomala} for $d=1$ and $\alpha>1/2$ (the same proof also works for $d=2,3$ and $d/2<\alpha\leq 2$), and, independently via a different method, by Chen and Seeger \cite{chen-seeger2015} for all $d\geq 1 $ and $\alpha=\beta=d/k$, where $k\in\NN$. The best possible range for a measure supported on a set of Hausdorff dimension $\alpha$ 
is $p\geq 2d/\alpha$ (this follows easily from energy estimates, see, for example, \cite{HL}), and that range is in fact achieved in \cite{chen-seeger2015}.
An earlier paper by Chen \cite{chen2} provides an example of a measure on $\RR$ supported on a set of Hausdorff dimension $1/2$ for which  (\ref{res-ineq}) holds for all $p\geq 4$, but that measure does not obey \eq{A} or \eq{B}.

Theorem \ref{HL thm} says that the range of $p$ in Theorem \ref{mock-mit theorem} is sharp for the class of measures on $\RR$ such that \eq{A} and \eq{B} hold with $0 < \beta \leq \alpha < 1$, but it says nothing about measures on $\RR^d$ for $d \geq 2$. The construction of \cite{HL} (or \cite{chen}) does not appear to 
generalize in a straightforward manner to higher dimensions. However, we are able to combine it with the classical Knapp example to prove the
following sharpness theorem, which is the main result of this paper.

%
%

\begin{thm}\label{main-thm} Let $d \geq 2$. 
Given any $d-1 < \beta \leq \alpha < d$, there is a compactly supported probability measure $\nu$ on $\RR^d$ that satisfies \eq{A} and \eq{B} but does not satisfy \eq{res-ineq} for any $p < (4-4\alpha+2\beta)/\beta$. In particular, there is a sequence of functions $f_l \in L^2(\nu)$ such that 
\begin{align*}
\lim_{l \rightarrow \infty} \dfrac{ \| \widehat{f_l d\nu} \|_{p} }{ \| f_l \|_{L^2(\nu)} } = \infty \quad \forall p < (4d-4\alpha+2\beta)/\beta.
\end{align*}
\end{thm}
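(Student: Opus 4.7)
The plan is to build $\nu$ as a polar product of the one-dimensional Hambrook--{\L}aba measure with surface measure on $S^{d-1}$, and to produce the failing sequence $f_l$ by pairing the HL one-dimensional test functions with classical Knapp spherical caps. First I would set $\alpha_1 := \alpha - (d-1)$ and $\beta_1 := \beta - (d-1)$, so that $0 < \beta_1 \leq \alpha_1 < 1$ by hypothesis. Theorem \ref{HL thm} then produces a compactly supported probability measure $\mu_1$ on $\RR$ satisfying \eq{A} and \eq{B} with these exponents, together with functions $g_l \in L^2(\mu_1)$ whose ratios $\|\widehat{g_l d\mu_1}\|_p / \|g_l\|_{L^2(\mu_1)}$ blow up for every $p < (4-4\alpha_1+2\beta_1)/\beta_1$. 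After rescaling I may assume $\supp(\mu_1) \subset [1/2,1]$. Letting $\sigma$ be normalized uniform measure on $S^{d-1}$, I define $\nu$ as the pushforward of $\mu_1 \times \sigma$ under the polar map $(r,\omega) \mapsto r\omega$.

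Checking \eq{A} for $\nu$ with exponent $\alpha = \alpha_1 + (d-1)$ reduces to combining $\mu_1([r_0-\rho,r_0+\rho]) \lesssim \rho^{\alpha_1}$ with the standard cap estimate $\sigma(B(\omega_0,\rho)) \lesssim \rho^{d-1}$, since the polar preimage of $B(r_0\omega_0,\rho)$ lies in a product of these. For \eq{B} with exponent $\beta = \beta_1+(d-1)$ I write
\begin{equation*}
\widehat{\nu}(\xi) = \int_{1/2}^1 \widehat{\sigma}(r\xi)\,d\mu_1(r)
\end{equation*}
and insert the classical stationary-phase expansion $\widehat{\sigma}(\eta) = |\eta|^{-(d-1)/2}\bigl(c_+ e^{2\pi i |\eta|} + c_- e^{-2\pi i |\eta|}\bigr) + O(|\eta|^{-d/2})$. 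The main terms become $|\xi|^{-(d-1)/2}$ times the one-dimensional oscillatory integrals $\int r^{-(d-1)/2} e^{\pm 2\pi i r|\xi|}\, d\mu_1(r)$; since the weight $r^{-(d-1)/2}$ is smooth on $[1/2,1]$, the weighted measure inherits the Fourier decay $(1+|\xi|)^{-\beta_1/2}$ of $\mu_1$ by the standard convolution argument. Combined with the error term (smaller since $\beta_1 < 1$), this yields $|\widehat{\nu}(\xi)| \lesssim |\xi|^{-(d-1+\beta_1)/2} = |\xi|^{-\beta/2}$.

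For the failing sequence I take $f_l(r\omega) := g_l(r)\,\one_{C_{\epsilon_l}}(\omega)$, where $C_{\epsilon_l} \subset S^{d-1}$ is the spherical cap of radius $\epsilon_l$ around $e_1$ and $\epsilon_l$ is a Knapp scale coupled to $l$. Then $\widehat{f_l d\nu}(\xi) = \int g_l(r)\,\widehat{\one_{C_{\epsilon_l}} d\sigma}(r\xi)\,d\mu_1(r)$. The standard Knapp analysis shows $\widehat{\one_{C_{\epsilon_l}} d\sigma}(r\xi) \approx \epsilon_l^{d-1} e^{-2\pi i r\xi_1}$ uniformly for $r \in [1/2,1]$ on the dual tube $T_l = \{\xi = \xi_1 e_1 + \xi_\perp : |\xi_1| \lesssim \epsilon_l^{-2},\ |\xi_\perp| \lesssim \epsilon_l^{-1}\}$, so that $|\widehat{f_l d\nu}(\xi)| \approx \epsilon_l^{d-1}|\widehat{g_l d\mu_1}(\xi_1)|$ on $T_l$. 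Using the transverse volume $\epsilon_l^{-(d-1)}$ I obtain
\begin{equation*}
\|\widehat{f_l d\nu}\|_p \gtrsim \epsilon_l^{(d-1)-(d-1)/p}\,\|\widehat{g_l d\mu_1}\|_{L^p(|\xi_1| \lesssim \epsilon_l^{-2})},\qquad \|f_l\|_{L^2(\nu)} \approx \epsilon_l^{(d-1)/2}\,\|g_l\|_{L^2(\mu_1)},
\end{equation*}
and so
\begin{equation*}
\frac{\|\widehat{f_l d\nu}\|_p}{\|f_l\|_{L^2(\nu)}} \gtrsim \epsilon_l^{(d-1)(p-2)/(2p)}\cdot \frac{\|\widehat{g_l d\mu_1}\|_p}{\|g_l\|_{L^2(\mu_1)}}.
\end{equation*}

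The hard part will be the rate matching: the Knapp factor $\epsilon_l^{(d-1)(p-2)/(2p)}$ decays (for $p>2$) as $\epsilon_l \to 0$, so merely knowing that the 1D quotient blows up is not enough to obtain the full combined range. I would open up the one-dimensional HL construction (the explicit random-Cantor version in \cite{chen}, or the thesis version \cite{hambrook-thesis}, is best suited) and extract the growth rate of $\|\widehat{g_l d\mu_1}\|_p / \|g_l\|_{L^2(\mu_1)}$ in terms of the frequency scale $R_l$ at which $\widehat{g_l d\mu_1}$ concentrates. I would then couple $\epsilon_l^{-2} \sim R_l$ and verify algebraically that the resulting exponent in $R_l$ is strictly positive precisely when $p < (4d-4\alpha+2\beta)/\beta$; this is a short computation linking the 1D threshold $(4-4\alpha_1+2\beta_1)/\beta_1$, the Knapp threshold $(2d+2)/(d-1)$, and the target combined threshold, which interpolates correctly between them and reduces to each in the appropriate degenerate limit.
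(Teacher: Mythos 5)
Your overall architecture matches the paper's: the measure $\nu$ is the polar product of the one-dimensional Hambrook--{\L}aba--Chen measure with spherical measure (the paper inserts a harmless weight $|x|^{-(d-1)/2}$ so that Gatesoupe's theorem gives \eq{B} directly, which is exactly the stationary-phase computation you sketch), the test functions are Knapp caps of angular width $\epsilon_l=N_l^{-1/2}$ coupled to the radial Cantor scale $N_l^{-1}$, and your final exponent arithmetic — $\epsilon_l^{(d-1)(p-2)/2}$ times the quantitative 1D rate, with $\epsilon_l^{-2}\sim N_l$ — does reproduce the threshold $(4d-4\alpha+2\beta)/\beta$. Your verifications of \eq{A} and \eq{B} are fine.

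The genuine gap is the step ``$|\widehat{f_l d\nu}(\xi)| \approx \epsilon_l^{d-1}|\widehat{g_l d\mu_1}(\xi_1)|$ on $T_l$.'' The Knapp estimate gives $\widehat{\one_{C_{\epsilon_l}}d\sigma}(r\xi)=e^{-2\pi i r\xi_1}\,\sigma(C_{\epsilon_l})\,(1+E(r,\xi))$ with $|E|=O(\eta)$, but $E$ depends on $r$, so after integrating against $g_l(r)\,d\mu_1(r)$ you get
$\widehat{f_l d\nu}(\xi)=\sigma(C_{\epsilon_l})\bigl(\widehat{g_l d\mu_1}(\xi_1)+O(\eta\,\|g_l\|_{L^1(\mu_1)})\bigr)$,
an \emph{additive} error of size $\eta\|g_l\|_{L^1}$ rather than a multiplicative one. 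Since $|\widehat{g_l d\mu_1}(\xi_1)|$ is far smaller than $\|g_l\|_{L^1}$ on most of the frequency range $|\xi_1|\lesssim N_l$ (that is the whole point of the Fourier decay), the pointwise lower bound fails, and the triangle-inequality repair does not close either: the error's contribution to $\|\cdot\|_{L^p(T_l)}^p$ is $\approx \eta^p\|g_l\|_{L^1}^p\lambda(T_l)$, which exceeds the only available lower bound on the main term (the 1D moment bound gives $\|\widehat{g_l d\mu_1}\|_{L^p(|\xi_1|\lesssim N_l)}^p\gtrsim (2r)^{-l}S_l^{-1}\cdot N_l\|g_l\|_{L^1}^p$, smaller by the factor $(2r)^{-l}S_l^{-1}\to0$). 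So the 1D result cannot be used as a quantitative black box transferred through a pointwise Knapp estimate.

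The fix — and this is what the paper does — is to interleave the Knapp estimate with the additive-energy argument rather than run them in series: expand $\|\widehat{f_l d\nu}\|_{2r}^{2r}$ over $2r$-tuples of caps $\psi_{a_1},\dots,\psi_{a_{2r}}$ with $a_i\in P_l$, prove that every term $I(a_1,\dots,a_{2r})$ is non-negative (a separate convolution/Fourier-inversion lemma), discard all off-diagonal terms, and on the diagonal $\sum_{i\le r}a_i=\sum_{i>r}a_i$ control the Knapp error \emph{for each individual cap} by $|E(\xi,a)|\le 6\pi d\eta\,\widehat{\psi_a d\nu}(0)$, i.e.\ relative to that cap's own mass, where it genuinely is a small multiplicative perturbation. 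The lower bound on the number of diagonal tuples then comes from the multi-scale progression structure of $P_l$ via Cauchy--Schwarz. You will need to open up the 1D construction in any case (as you anticipate), but the essential missing idea is the positivity-plus-diagonal-restriction mechanism that replaces the false pointwise lower bound.
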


Theorem \ref{main-thm} says that when $d \geq 2$ the range of $p$ in Theorem \ref{mock-mit theorem} is sharp for the class of measures on $\RR^d$ such that \eq{A} and \eq{B} hold with $d-1 < \beta \leq \alpha < d$. Readers interested in future research should note that the problem remains open when $d \geq 2$ and $\alpha$, $\beta$ do not satisfy $d-1 < \beta \leq \alpha$.

The proof of Theorem \ref{main-thm} adapts the Hambrook-{\L}aba construction in dimension 1 (with the modifications due to Chen \cite{chen}) to higher dimensions by combining it with the classical Knapp example.


The starting point for the proof of Theorem \ref{HL thm} in \cite{HL}, \cite{chen} is a  construction due to {\L}aba and Pramanik \cite{LP} of random Cantor sets in $\RR$ of dimension $0 < \alpha < 1$ whose natural measures $\mu$ satisfy conditions \eq{A} and \eq{B} for any $\beta < \alpha$. 
The key new idea of \cite{HL} was that such sets could be modified to include lower-dimensional deterministic Cantor-type subsets that have far more arithmetic structure than the rest of the set. As long as the embedded deterministic subset is small enough, the conditions \eq{A} and \eq{B} continue to hold for the natural measure $\mu$ on the Cantor set. At the same time, such 
subsets can be used to construct counterexamples to restriction estimates with $p$ beyond the range guaranteed by Theorem \ref{mock-mit theorem}. Specifically, we choose the deterministic Cantor subset so that, for each $l$, the set $P_l$ of the left endpoints of its $l$-th stage intervals forms a multi-scale arithmetic progression, and let $f_l$ be essentially the indicator function of the union of these $l$-th stage intervals. The most difficult part of the proof is establishing a sufficiently large lower bound on $\|\widehat{f_l d\mu} \|_{p}$. This ultimately reduces to counting solutions to equations of the form $\sum_{i=1}^{r} a_i = \sum_{i=r+1}^{2r} a_i$, where the $a_i$ are finite-stage left endpoints of the Cantor set that lie in the support of $f_l$. The arithmetic structure of $P_l$ ensures that the number of such solutions is sufficiently large.

Our measure $\nu$ in Theorem \ref{main-thm} is a radial version of the measure $\mu$ from Theorem \ref{HL thm}. Thus the support of $\nu$ consists of nested spheres centered at the origin, where each point in the support of $\mu$ corresponds to a sphere in the support of $\nu$. At finite stages of the construction, each interval of the $l$-th iteration of the Cantor set on the line corresponds to an annulus centered at the origin, of thickness $\delta^2$ (depending on $l$). In order to construct our
counterexample, we first restrict attention to those annuli that correspond to the deterministic subset of the Cantor set. We then fix a half-line in $\RR^d$ starting at the origin, say one of the coordinate half-axes, and consider cylindrical sectors of the chosen annuli, centered on that half-line and of diameter $\delta$. This produces essentially a family of Knapp examples, stacked along the fixed half-line and parameterized by the deterministic Cantor set. Our functions $f_l$ will be the appropriately regularized indicator  functions of sets of this type. To prove our lower bound on $\|\widehat{f_l d\nu} \|_{p}$, we will use a non-trivial combination of the Hambrook-{\L}aba additive arguments and the classical Knapp calculation.

As explained above, we will rely on the construction from \cite{HL}, \cite{chen} of Cantor sets on the line that obey \eq{A} and \eq{B} with exponents $\alpha_0 = \alpha - (d-1)$ and $\beta_0 = \beta - (d-1)$, respectively, and that additionally contain lower-dimensional Cantor subsets with finite-stage endpoints forming multi-scale arithmetic progressions. Since that construction is somewhat long, we do not reproduce it here. Instead, in the next few sections we list the parameters of the construction and state the properties that will be needed in the proof of Theorem \ref{main-thm}.  The details are almost identical to those in \cite{chen}, and we encourage the interested reader to consult that paper; however, we do provide an overview of the construction in Section \ref{meaning}. The rest of the paper is devoted to the proof of Theorem \ref{main-thm}. 

\section{The numbers $\alpha_0$, $\beta_0$, $s_j$, $t_j$, $n_j$, $S_j$, $T_j$, $N_j$}\label{numbers-chap2}


We begin by defining the numerical parameters of the one-dimensional construction. 

Define $\alpha_0 = \alpha - (d-1)$ and $\beta_0 = \beta - (d-1)$. 
Let $(s_j)_{j=1}^{\infty}$, $(t_j)_{j=1}^{\infty}$, $(n_j)_{j=1}^{\infty}$ be sequences of positive integers. Define
\begin{align}
\label{sh-eq-1}
S_j = s_1 \cdots s_j, \quad T_j = t_1 \cdots t_j, \quad N_j = n_1 \cdots n_j
\end{align}
for $j \geq 1$ and $S_0 = T_0 = N_0 = 1$.
We assume the sequences satisfy the following.
\begin{flalign}
\label{sh-eq-2}
\bullet \quad &s_j \leq t_j < n_j / 2 \quad \forall j \in \NN& \\
\label{sh-eq-3}
\bullet \quad &\lim_{j \rightarrow \infty} n_j = \infty & \\
\label{sh-eq-4}
\bullet \quad &\lim_{j \rightarrow \infty} \dfrac{n_{j}}{j} = 0 & 
\\
\label{sh-eq-4.5}
\bullet \quad &\lim_{j \rightarrow \infty} \dfrac{n_{j}^{d-1}}{\ln(400 j N_{j})} = 0 &  
\\
\label{sh-eq-5} 
\bullet \quad &T_{j} \approx N_{j+1}^{\alpha_0} \ln(400 (j+1)N_{j+1}) \quad \forall j \in \NN, j \geq j_0& \\
\label{sh-eq-6}
\bullet \quad &\dfrac{S_{j+1}}{T_{j+1}} \approx N_{j+1}^{-\beta_0/2} \quad \forall j \in \NN, j \geq j_0& 
\end{flalign}
Here $j_0$ is some large positive integer. It is easy but tedious to prove that such sequences indeed exist.


\section{The sequences of sets $(A_j)_{j=0}^{\infty}$ and $(P_j)_{j=0}^{\infty}$}\label{seq-sets}

In this section, we describe the sequences of sets $(A_j)_{j=0}^{\infty}$ and $(P_j)_{j=0}^{\infty}$. These will be the left endpoints of our Cantor sets on the line.
Here and below, we will use the notation $[n]=\{0,\dots,n-1\}$ for $n \in \NN$.

Let 
\begin{align*}
P_0 = \cbr{1}, 
\quad P_{j+1} 
= \bigcup_{a \in P_j} a + \dfrac{1}{N_{j+1}}\cbr{1,3,\ldots,2s_{j+1}-1}.
\end{align*}
Thus each $P_j$ is a generalized arithmetic progression and $|P_j| = S_j$.

Let $(A_j)_{j=0}^{\infty}$ be a sequence of sets with the following properties:
\begin{flalign}
\label{seq-sets-eq-0}
\bullet \quad &\text{For each $a \in A_j$, there exists $A_{j+1,a} \subseteq \dfrac{1}{N_{j+1}}[n_{j+1}]$ with $|A_{j+1,a}| = t_{j+1}$;} \\
\label{seq-sets-eq-1}
\bullet \quad &A_0 = \cbr{1};& \\
\label{seq-sets-eq-2}
\bullet \quad &A_{j+1} = \bigcup_{a \in A_j} a + A_{j+1,a};& \\
\label{seq-sets-eq-5}
\bullet \quad &P_j \subseteq A_j;& \\
\label{seq-sets-eq-6}
\bullet \quad &\text{For each $a \in P_j$, $A_{j+1,a}$ is disjoint from $\dfrac{1}{N_{j+1}}\cbr{0,2,\ldots,2s_{j+1}}$};& \\
\label{seq-sets-eq-7}
\bullet \quad &|\widehat{\mu}(\xi)| \lesssim (1+|\xi|)^{-\beta_0/2} \text{ for all } \xi \in \RR.&
\end{flalign}
Here $\mu$ is the natural measure on the Cantor set defined through a standard iterative procedure with $A_j$ as the left endpoints of the construction intervals (see Section \ref{meas-func-chap-2}). The sequence $(A_j)_{j=0}^{\infty}$ can be constructed by making trivial modifications to the construction of Chen \cite{chen} (cf. \cite{hambrook-thesis}, \cite{HL}). 
Note that $A_j \subseteq N_j^{-1}[N_{j}]$ and $|A_j| = T_j$.


\section{The measure $\mu$}\label{meas-func-chap-2}


For $j = 0,1,2,...$, define
\begin{align*}
E_j = A_j + [0,N_{j}^{-1}] = \bigcup_{a \in A_j} [a,a+N_{j}^{-1}]
\end{align*}
and define $\mu_{j}$ to be the uniform probability measure on $E_j$, that is, 
$$
d\mu_j = \dfrac{1}{|E_j|} \one_{E_j} dx = \dfrac{N_j}{T_j} \sum_{a \in A_j} \one_{[a,a+N_{j}^{-1}]} dx.
$$
Define $\mu$ to be the weak limit of $(\mu_j)_{j=0}^{\infty}$.
The existence of the weak limit in this type of construction is standard, so we omit the proof.  
Note that $\mu$ is the so-called natural measure on the Cantor set 
\begin{equation}\label{supp-mu}
\text{supp}(\mu) = \bigcap_{j=1}^{\infty} E_j = \bigcap_{j=1}^{\infty} \bigcup_{a \in A_j} [a,a+N_{j}^{-1}] \subseteq [1,2],
\end{equation}
and
\begin{equation}\label{mu-of-interval}
\mu([a,a+N_{j}^{-1}]) = \dfrac{1}{T_j}.
\end{equation}

\section{The measure $\nu$}\label{sec-nu}

Let $\sigma$ be the uniform probability measure on the unit sphere $S^{d-1}$ in $\RR^d$. 
Define the measure $\nu$ on $\RR^d$ by
$$
d\nu(x) = |x|^{-(d-1)/2}d\mu(|x|) \otimes d\sigma(x/|x|).
$$
Clearly, $\nu$ is a finite non-trivial measure.

Let $l \in \NN$, $a \in N_{l}^{-1}[N_l]$, $e \in S^{d-1}$. Let $\delta = N_{l}^{-1/2}$.  
Define $D_{a,\delta^2}$ to be the annulus with the center at the origin, inner radius $a$, and thickness $\delta^2 = N_{l}^{-1}$. That is,
\begin{align*}
D_{a,\delta^2} = \cbr{x \in \RR^d : |x| \in  [a,a + N_{l}^{-1}] }.
\end{align*}
Then by (\ref{supp-mu}),
\begin{equation}\label{supp-nu}
\text{supp}(\nu) = \bigcap_{j=1}^{\infty} \bigcup_{a \in A_j} D_{a,\delta^2} \subseteq \cbr{x \in \RR^d : 1 \leq |x| \leq 2}.
\end{equation}
Define $C_{a,\delta^2,w,e}$ to be the sector of the annulus $D_{a,\delta^2}$ that has width $w$ and is centered on the half-line parallel to the unit vector $e$. That is, 
\begin{align*}
C_{a,\delta^2,w,e} = \cbr{x \in \RR^d : |\frac{x}{|x|} - e| \leq w/2, \, |x| \in  [a,a + N_{l}^{-1}] }.
\end{align*}
It follows easily from (\ref{mu-of-interval}) that for $w<1/2$,
\begin{align}
\label{old-cap-1}
\nu(C_{a,\delta^2,w,e}) &\approx w^{d-1} T_{l}^{-1} \quad \forall a \in A_{l}. 
\end{align}
%
%
%
%
%
%
%
%
%
%
%


\section{The meaning of the numbers, sets, and measures}\label{meaning}

In this section, we provide an overview of the construction of the finite-stage Cantor endpoint sets $(A_j)_{j=0}^{\infty}$ and $(P_j)_{j=0}^{\infty}$ and the limiting measures $\mu$ and $\nu$, and we explain how the parameters $\alpha_0$, $\beta_0$, $s_j$, $t_j$, $n_j$, $S_j$, $T_j$, $N_j$ come into play.

In general terms, the Cantor construction proceeds as follows. Start with the interval $[1,2]$. Divide it into $n_1$ equal subintervals, select $t_1$ of them, and discard the rest. For each selected subinterval, divide it into $n_2$ equal subintervals, select $t_2$ of them, and discard the rest. Continue in this way. At the $j$-th stage, we have a set $E_j$ consisting of the union of $T_j$ intervals of length $N_j^{-1}$. The left endpoints of the intervals making up $E_j$ form the set $A_j$. The support of $\mu$ is the Cantor set $E_\infty:=\cap_{j=1}^{\infty} E_j$. The support of $\nu$ is the union of those spheres in $\RR^d$ centered at the origin with radii in $E_\infty$.

Our restriction counterexample relies on a very particular choice of the subintervals in the above construction. 
We would like each $E_j$ to contain the set $P_j+[0,N_j^{-1}]$, the $j$-th stage iteration of a self-similar Cantor set whose left endpoints form a multi-scale arithmetic progression. (Thus the set $E_\infty$ contains the lower-dimensional, self-similar, strongly structured set $\cap_{j=1}^{\infty} (P_j+[0,N_j^{-1}])$, but we will not use this fact directly, and will instead work with finite iterations of the construction.) This will be essential for disrupting the restriction inequality beyond the range of exponents in Theorem \ref{mock-mit theorem}.

To this end, we need to make sure at each stage that $P_j \subseteq A_j$, but also that $A_j$ is otherwise sufficiently random for \eq{seq-sets-eq-7} to hold. 
We start with
$A_0=P_0=\{1\}$ and proceed by induction. Recall that, for $j \geq 0$, $P_{j+1} = \bigcup_{a \in P_j} a + N_{j+1}^{-1}\cbr{1,3,\ldots,2s_{j+1} - 1}$. 
Suppose that $A_j$ is given, with $P_j \subseteq A_j$. Then $E_j$ is a union of $T_j$ intervals of length $N_j^{-1}$. 
Consider one such interval, with left endpoint at some $a \in A_j$. 
We divide the interval into $n_{j+1}$ equal subintervals, with left endpoints in $a+{N_{j+1}^{-1}}[n_{j+1}]$. 
From these endpoints, we wish to select a subset $a + A_{j+1,a}$ of cardinality $t_{j+1}$; the union of the sets $a + A_{j+1,a}$ over all $a \in A_j$ will be the set $A_{j+1}$. The selection procedure depends on whether $a \in P_j$: 


\begin{itemize}
\item If $a \in P_j$, then we always start by selecting the $s_{j+1}$ endpoints that form the arithmetic progression $a + N_{j+1}^{-1}\cbr{1,3,\ldots,2s_{j+1}-1}$, thus ensuring that $P_{j+1} \subseteq A_{j+1}$. 
We then select the other $t_{j+1} - s_{j+1}$ endpoints from $a + {N_{j+1}^{-1}}\cbr{2s_{j+1}+1, \ldots, n_{j+1}}$. The selection is made in such a way that, provided \eq{sh-eq-6} holds, $\widehat{\mu}$ will decay as in \eq{seq-sets-eq-7}. A probabilistic argument is used to prove that this is possible. 
Note that endpoints in $a + N_{j+1}^{-1}\cbr{0,2,\ldots,2s_{j+1}}$ are not allowed in the selection process, in order to ensure that \eq{seq-sets-eq-6} holds. 

\medskip

\item If $a \notin P_j$, then we use a probabilistic argument to select $t_{j+1}$ endpoints from $a+{N_{j+1}^{-1}}[n_{j+1}]$ so that \eq{seq-sets-eq-7} will hold. No further modifications are needed. 
\end{itemize}

We close this section with a few words about the parameters $s_j$, $t_j$, and $n_j$ in Section \ref{numbers-chap2}. 
Essentially, we want $n_j$ and $t_j$ to be slowly growing sequences with the asymptotic dimensionality condition $t_j\sim n_j^{\alpha_0}$. 
The precise description is provided by \eq{sh-eq-2}--\eq{sh-eq-5}. These imply that the linear Cantor set $E_\infty$ has Hausdorff dimension $\alpha_0$ and that, moreover, $\mu$ satisfies $\mu((x-r,x+r)) \lesssim r^{\alpha_0}$ for all $x \in \RR, r>0$. In Section \ref{ball-nu}, this will be used to prove that $\nu$ satisfies \eq{A}.

The numbers $s_j$ denote the length of the arithmetic progressions included in the endpoint sets in the Cantor construction. For optimal counterexamples, we would like to maximize $s_j$ subject to the constraint that the set $A_j$ still be random enough for 
\eq{seq-sets-eq-7} to hold. Roughly speaking, this requires that $s_j \sim t_j n_j^{-\beta_0/2}$ asymptotically; the precise statement we need is \eq{sh-eq-6}. As we will see in Section \ref{decay-nu}, \eq{seq-sets-eq-7} implies that the measure $\nu$ satisfies \eq{B}.

\section{The ball condition for $\nu$}\label{ball-nu}

In this section, we prove $\nu$ satisfies \eq{A}.

Let $x \in \RR^d$ and $r > 0$ be given. If $2r \geq N_{1}^{-1}$, then
$$
\nu(B(x,r)) \leq \nu(\RR^d) \leq \nu(\RR^d) (2 N_1)^{\alpha} r^{\alpha}.
$$
Now suppose $0 < 2r < N_{1}^{-1}$. Choose $l \in \NN$ such that $N_{l+1}^{-1} \leq 2r \leq N_{l}^{-1}$. Assume $B(x,r)$ intersects $\supp(\nu)$ (otherwise $\nu(B(x,r)) = 0$). By (\ref{supp-nu}),
$B(x,r)$ intersects $D_{a,\delta^2}$ for some $a \in A_l$. Since $2r \leq N_{l}^{-1} = \delta^2$, there are at most two such $a \in A_l$, say $a'$ and $a''$. Moreover, if we set $e = x/|x|$, then
$$
B(x,r) \subseteq C_{a',\delta^2,\delta^2,e} \cup C_{a'',\delta^2,\delta^2,e}
$$ 
Therefore, by \eq{old-cap-1}, \eq{sh-eq-4.5}, and \eq{sh-eq-5}, we have
\begin{align*}
\nu(B(x,r)) 
&\leq \nu(C_{a',\delta^2,\delta^2,e}) + \nu(C_{a'',\delta^2,\delta^2,e}) \\
&\approx N_{l}^{-(d-1)}T_{l}^{-1}
\approx N_{l}^{-(d-1)} N_{l+1}^{-\alpha_0} (\ln(400 (l+1)N_{l+1}))^{-1} \\
&= N_{l+1}^{-(d-1) - \alpha_0} n_{l+1}^{d-1} (\ln(400 (l+1)N_{l+1}))^{-1} 
\lesssim N_{l+1}^{-(d-1) - \alpha_0} \\
&\lesssim r^{d-1 + \alpha_0} = r^{\alpha}
\end{align*}
as required.

\section{The Fourier decay of $\nu$}\label{decay-nu}

To prove that $\nu$ obeys \eq{B}, we invoke a theorem of Gatesoupe \cite{Gatesoupe}:

\begin{thm}\label{gatesoupe}
Let $\mu$ be a non-trivial measure on $\RR$ with compact support contained in $(0,\infty)$. Suppose 
$$
|\widehat{\mu}(\xi)| \lesssim \phi(|\xi|) \qquad \forall \xi \in \RR, \xi \neq 0,
$$
where $\phi: (0,\infty) \rightarrow (0,\infty)$ satisfies 
$$
\frac{1}{t} \lesssim \phi(t) \qquad \forall t \geq 1. 
$$
Then the measure
$$
d\nu(x) = |x|^{-(d-1)/2}d\mu(|x|) \otimes d\sigma(x/|x|) 
$$
satisfies 
$$
|\widehat{\nu}(\xi)| \lesssim  |\xi|^{-(d-1)/2} \phi(|\xi|) \qquad \forall \xi \in \RR^d, |\xi| \geq 1.
$$
\end{thm}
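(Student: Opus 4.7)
The plan is to use the radial structure of $\nu$ to express $\widehat{\nu}$ as a one-dimensional integral against $d\mu$, then exploit the classical large-argument asymptotic of the Fourier transform of $\sigma$. First I would integrate in polar coordinates $x = r\omega$ to obtain
\begin{equation*}
\widehat{\nu}(\xi) = \int_0^\infty r^{-(d-1)/2}\,\widehat{\sigma}(r\xi)\,d\mu(r).
\end{equation*}
Since $\widehat{\sigma}$ is radial, $\widehat{\sigma}(r\xi) = h(r|\xi|)$ for the Bessel-type function $h$ associated with the sphere, and $h$ admits the standard stationary-phase expansion
\begin{equation*}
h(t) = t^{-(d-1)/2}\bigl(A_+ e^{2\pi i t} + A_- e^{-2\pi i t}\bigr) + R(t), \qquad |R(t)| \lesssim t^{-(d+1)/2},
\end{equation*}
valid for $t$ bounded below by a positive constant. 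On $\supp\mu$, $r$ is bounded above and bounded away from $0$, so this expansion applies for $|\xi|$ large; for $|\xi|$ in any bounded range the conclusion is trivial using $\|\nu\| < \infty$ and $\phi(t) \gtrsim 1/t$.

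Substituting the expansion yields
\begin{equation*}
\widehat{\nu}(\xi) = |\xi|^{-(d-1)/2}\sum_{\pm} A_\pm \int r^{-(d-1)} e^{\pm 2\pi i r|\xi|}\,d\mu(r) + \int r^{-(d-1)/2} R(r|\xi|)\,d\mu(r).
\end{equation*}
The remainder is $\lesssim |\xi|^{-(d+1)/2} \lesssim |\xi|^{-(d-1)/2}\phi(|\xi|)$ by the hypothesis $\phi(t) \gtrsim 1/t$, which is exactly the role of that lower bound. Each of the two main terms equals $|\xi|^{-(d-1)/2}\widehat{g\mu}(\mp|\xi|)$, where $g(r) = r^{-(d-1)}$ is smooth on the compact set $\supp\mu \subset (0,\infty)$. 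Extending $g$ to $\tilde g \in C_c^\infty((0,\infty))$ with $\tilde g = g$ on $\supp\mu$, I would replace $g\mu$ by $\tilde g\mu$ and use $\widehat{\tilde g\mu} = \widehat{\tilde g} \ast \widehat{\mu}$.

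The main obstacle, and the only nontrivial step, is to show $|\widehat{\tilde g}\ast\widehat{\mu}(\zeta)| \lesssim \phi(|\zeta|)$. I would split the convolution at $|\eta| = |\zeta|/2$. On $\{|\eta| \leq |\zeta|/2\}$, $|\zeta - \eta|$ is comparable to $|\zeta|$, so the hypothesis $|\widehat{\mu}(\zeta-\eta)| \lesssim \phi(|\zeta - \eta|)$ combined with the integrable Schwartz decay of $\widehat{\tilde g}$ produces a bound of order $\phi(|\zeta|)$; this uses that $\phi$ is essentially doubling, which is automatic in the intended application with $\phi(t) = (1+t)^{-\beta_0/2}$ (in general one passes to the decreasing envelope of $|\widehat\mu|$). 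On $\{|\eta| > |\zeta|/2\}$, the Schwartz decay of $\widehat{\tilde g}$ combined with $|\widehat{\mu}| \leq \|\mu\|$ contributes at most $O((1+|\zeta|)^{-N})$ for any $N$, which is absorbed into $\phi(|\zeta|)$ once more via $\phi \gtrsim 1/t$. Combining these three estimates gives the claimed bound $|\widehat{\nu}(\xi)| \lesssim |\xi|^{-(d-1)/2}\phi(|\xi|)$ for $|\xi| \geq 1$.
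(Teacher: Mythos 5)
The paper does not actually prove this statement: it is quoted as an external result and justified solely by the citation to Gatesoupe, so there is no in-paper argument to compare against. Your proposal supplies the standard proof of that cited theorem, and it is essentially correct: the polar-coordinate identity $\widehat{\nu}(\xi)=\int r^{-(d-1)/2}\widehat{\sigma}(r\xi)\,d\mu(r)$, the stationary-phase expansion of $\widehat{\sigma}$ (valid since $r$ is bounded above and below on $\supp\mu$), the absorption of the $O((r|\xi|)^{-(d+1)/2})$ remainder using $\phi(t)\gtrsim 1/t$, and the reduction of the main terms to $\widehat{\tilde g\mu}=\widehat{\tilde g}*\widehat{\mu}$ with $\tilde g$ smooth and compactly supported are all exactly the right steps, and the split of the convolution at $|\eta|=|\zeta|/2$ finishes the argument.

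The one point deserving emphasis is the issue you flag yourself: the near-region estimate needs $\phi(s)\lesssim\phi(|\zeta|)$ for $s\in[|\zeta|/2,\,3|\zeta|/2]$, i.e.\ some essential monotonicity or doubling of $\phi$. This is not a defect of your proof so much as an imprecision in the statement as reproduced in the paper: for a completely arbitrary positive $\phi$ with $\phi(t)\gtrsim 1/t$ the conclusion can fail (take $\mu$ with $|\widehat\mu(s)|\approx(1+|s|)^{-1/4}$ but with exact zeros of $\widehat\mu$ along a sparse sequence $t_k$, and set $\phi(t_k)=1/t_k$ there; convolution with $\widehat{\tilde g}$ averages away the zeros, so $|\widehat{\tilde g\mu}(t_k)|\approx t_k^{-1/4}\gg\phi(t_k)$). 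Your parenthetical fix of passing to the decreasing envelope of $|\widehat\mu|$ does not rescue truly arbitrary $\phi$ either, since that envelope need not be $\lesssim\phi$ when $\phi$ is non-monotone; the correct reading of the theorem simply includes a monotonicity (or doubling) hypothesis on $\phi$. In the paper's application $\phi(t)=(1+t)^{-\beta_0/2}$, so this is harmless, and with that proviso your argument is complete.
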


By \eq{seq-sets-eq-7}, the measure $\mu$ obeys the assumptions of the theorem with 
\mbox{$\phi(|\xi|)=(1+|\xi|)^{-\beta_0/2}$.} Recalling that $\beta=\beta_0+d-1$, we get \eq{B} as claimed.

\section{The functions $\psi_a$ and $f_l$}\label{functions}

We first fix $l \in \NN$ and define the functions $\psi_a$ for $a \in P_l$. 

For $Y \subseteq \RR^d$ and $\epsilon > 0$, let 
$$
N_{\epsilon}(Y) = \cbr{x \in \RR^d : |x-y| < \epsilon \text{ for some } y \in Y}
$$
be the $\epsilon$-neighborhood of $Y$. Let $e_d$ be the standard unit vector $(0,\ldots,0,1) \in \RR^d$.

Fix $l \in \NN$ and $\delta = N_{l}^{-1/2}$. 
For each $a \in P_l$, choose a $C^{\infty}_{c}(\RR^d)$ function $\psi_{a}$ that is equal to 1 on $C_{a,\delta^2,\delta,e_d}$, is equal to 0 outside $N_{\delta^2 / 2 }(C_{a,\delta^2,\delta,e_d})$, and satisfies $0 \leq \psi_a \leq 1$ everywhere.


Since $\psi_a \in C^{\infty}_{c}(\RR^d)$, it follows that $\widehat{\psi_a}$ is a Schwartz function. 
In Section \ref{decay-nu}, we established that $\nu$ satisfies \eq{B}. 
It follows easily 
that 
\begin{align}\label{F decay psi nu}
|\widehat{\psi_a d\nu}(\xi)| = |(\widehat{\psi_a}* \widehat{\nu})(\xi)|\lesssim_{a,l} (1+|\xi|)^{-\beta/2}.
\end{align}
The implied constant here depends on $a$ and $l$; we could remove the dependence on $a$ by choosing each $\psi_{a}$ to be a translation of a single function, but this is not important for our argument.

\begin{lemma}\label{F psi_a size lemma}
With $\psi_a$ as above, we have 
\begin{align}\label{F psi_a size}
\widehat{\psi_a d\nu}(0) \approx \delta^{d-1} T_{l}^{-1} = N_{l}^{-(d-1)/2} T_{l}^{-1}, 
\end{align}
where the implied constant depends only on $d$.
\end{lemma}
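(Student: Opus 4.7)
The plan is to sandwich $\widehat{\psi_a d\nu}(0) = \int \psi_a \, d\nu$ between $\nu(C_{a,\delta^2,\delta,e_d})$ and $\nu(N_{\delta^2/2}(C_{a,\delta^2,\delta,e_d}))$, using the pointwise bounds $0 \leq \psi_a \leq 1$ together with $\psi_a \equiv 1$ on $C_{a,\delta^2,\delta,e_d}$ and $\psi_a \equiv 0$ outside $N_{\delta^2/2}(C_{a,\delta^2,\delta,e_d})$. The lower bound is then immediate from \eq{old-cap-1} applied with width $w=\delta$ (note that $\delta = N_l^{-1/2} < 1/2$ once $l$ is large), yielding $\nu(C_{a,\delta^2,\delta,e_d}) \approx \delta^{d-1} T_l^{-1}$.

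The upper bound reduces to showing $\nu\bigl(N_{\delta^2/2}(C_{a,\delta^2,\delta,e_d})\bigr) \lesssim \delta^{d-1} T_l^{-1}$, which I would establish by covering the enlarged set with a bounded number of sectors of the form considered in \eq{old-cap-1}. Geometrically, thickening $C_{a,\delta^2,\delta,e_d}$ by $\delta^2/2$ radially yields an annulus of radii in $[a - \delta^2/2, a + 3\delta^2/2]$; thickening it tangentially at radius $\approx 1$ increases the angular half-width from $\delta/2$ to at most $\delta/2 + \delta^2/2 \leq \delta$ once $l$ is large. Thus
$$
N_{\delta^2/2}(C_{a,\delta^2,\delta,e_d}) \cap \supp(\nu) \subseteq \bigcup_{\substack{a' \in A_l \\ |a'-a| \leq 2\delta^2}} C_{a', \delta^2, 2\delta, e_d}.
$$
Because the left endpoints of $A_l$ lie in $N_l^{-1}[N_l]$ and hence are separated by at least $N_l^{-1} = \delta^2$, the union on the right contains only $O(1)$ terms. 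Applying \eq{old-cap-1} to each term with $w = 2\delta$ gives the matching upper bound $\lesssim \delta^{d-1} T_l^{-1}$, and combining the two bounds yields the claim.

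The only mildly delicate step is the angular/radial enlargement count above; the main point is simply that the separation of the Cantor endpoints at scale $N_l^{-1}$ exactly matches the neighborhood size $\delta^2$, so at most two further annuli can contribute. Everything else is a direct appeal to \eq{old-cap-1}, and no probabilistic input or use of \eq{seq-sets-eq-7} is required for this lemma.
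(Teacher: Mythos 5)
Your proof is correct, and its overall shape (sandwiching $\widehat{\psi_a d\nu}(0)=\int\psi_a\,d\nu$ between $\nu(C_{a,\delta^2,\delta,e_d})$ and $\nu(N_{\delta^2/2}(C_{a,\delta^2,\delta,e_d}))$, then invoking \eq{old-cap-1}) matches the paper's. The upper bound, however, is handled differently. The paper appeals to \eq{seq-sets-eq-6}: since $a\in P_l$ sits at an odd position and the adjacent even positions are excluded from $A_l$, the annulus $D_{a,\delta^2}$ is isolated from every other $D_{a',\delta^2}$, $a'\in A_l$, by a gap of width $\delta^2$, so the $\delta^2/2$-neighborhood meets $\supp(\nu)$ only inside the single annulus $D_{a,\delta^2}$ and one application of \eq{old-cap-1} with width $2\delta$ finishes. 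You instead cover $N_{\delta^2/2}(C_{a,\delta^2,\delta,e_d})\cap\supp(\nu)$ by the $O(1)$ widened sectors $C_{a',\delta^2,2\delta,e_d}$ with $|a'-a|\lesssim\delta^2$, using only that the elements of $A_l$ lie on a lattice of spacing $N_l^{-1}=\delta^2$, and apply \eq{old-cap-1} to each. Both arguments are valid and give constants depending only on $d$; yours is slightly more robust in that it does not use the buffering condition \eq{seq-sets-eq-6} at all (that condition is still needed elsewhere, e.g.\ to make the supports of the $\psi_a$ pairwise disjoint in Lemma \ref{lemma-bottomnorm}), while the paper's is marginally shorter given that \eq{seq-sets-eq-6} is available. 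One cosmetic remark: your tangential-thickening estimate $\delta/2+\delta^2/2$ should really be $\delta/2+O(\delta^2)$ (the angular displacement is bounded by $2|y-x|/|y|$, not $|y-x|$), but this does not affect the conclusion that the half-width stays below $\delta$ for $l$ large.
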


\begin{proof}
By \eq{old-cap-1}, 
\begin{align}
\label{psi-2}
\widehat{\psi_a d\nu}(0) \geq \nu(C_{a,\delta^2,\delta,e_d}) \approx \delta^{d-1} T_{l}^{-1}. 
\end{align}
On the other hand, 
\begin{align}\label{psi-1}
\widehat{\psi_a d\nu}(0) \leq \nu(N_{\delta^2 / 2 }(C_{a,\delta^2,\delta,e_d})).
\end{align}
Because of \eq{seq-sets-eq-6}, the sets $D_{a,\delta^2}$ with $a \in P_l$ are isolated from the sets $D_{a',\delta^2}$ with $a' \in A_l$, $a' \neq a$. 
Consequently, for each $a \in P_l$ we have
$$
N_{\delta^2 / 2 }(C_{a,\delta^2,\delta,e_d}) \cap N_{\delta^2 /2 }(D_{a',\delta^2}) = \emptyset \qquad \forall a' \in A_l, \, a' \neq a,
$$
so that only the annulus $D_{a,\delta^2}$ contributes to (\ref{psi-1}). 
It follows that
\begin{align*}
\widehat{\psi_a d\nu}(0) 
\leq \nu(C_{a,\delta^2,\delta+\delta^2,e_d}) 
\leq \nu(C_{a,\delta^2,2\delta,e_d})\lesssim 2^{d-1} \delta^{d-1} T_{l}^{-1},
\end{align*}
where at the last step we used \eq{old-cap-1}.
Combining this with \eq{psi-2} gives 
(\ref{F psi_a size}).
\end{proof}

Finally, we define $f_l = \sum_{a \in P_l} \psi_a$.


\section{A lower bound on $\|\widehat{f_l d\nu} \|^{2r}_{2r}$}\label{lower bound}

\begin{lemma}\label{prop-2rnorm}
Let $r \in \NN$ be such that $r \beta > d$. Then
\begin{align}\label{fldnu 2r ineq 4}
\|\widehat{f_l d\nu}\|^{2r}_{2r} \gtrsim (2r)^{-l} N_{l}^{(d+1)/2} N_{l}^{-r(d-1)} T_{l}^{-2r} S_l^{2r-1}.
\end{align}
\end{lemma}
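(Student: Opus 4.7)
The plan is to lower-bound $\|\widehat{f_l d\nu}\|_{2r}^{2r}$ by testing $|\widehat{f_l d\nu}|^{2r}$ against a non-negative Gaussian weight adapted to the dual Knapp box and reducing the problem to an additive energy count on $P_l$. Throughout this sketch, implied constants may depend on $r$ and $d$ but never on $l$. Fix $\delta=N_l^{-1/2}$ and let $\eta$ be the standard Gaussian on $\RR^d$ (so $\eta=\widehat{\eta}\ge 0$). Define $\eta_\delta(\xi):=\eta(\delta\xi_1,\dots,\delta\xi_{d-1},\delta^2\xi_d)$; then $\eta_\delta$ is comparable to a positive constant on the dual Knapp box of dimensions $\delta^{-1}\times\cdots\times\delta^{-1}\times\delta^{-2}$, while $\widehat{\eta_\delta}$ is a non-negative Gaussian with $\widehat{\eta_\delta}(0)=\delta^{-(d+1)}$ concentrated on the physical cap-shaped box of dimensions $\delta\times\cdots\times\delta\times\delta^2$. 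Since $\eta_\delta$ is uniformly bounded, $\|\widehat{f_l d\nu}\|_{2r}^{2r}\gtrsim \int |\widehat{f_l d\nu}(\xi)|^{2r}\eta_\delta(\xi)\,d\xi$.

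Expanding $f_l=\sum_{a\in P_l}\psi_a$, rewriting $|\widehat{f_l d\nu}|^{2r}$ as a multiple integral over $2r$ copies of $\supp(\nu)$, and applying Fourier inversion in $\xi$ (justified by Fubini since $\eta_\delta\in L^1$), the right-hand side equals
\begin{equation*}
\sum_{\substack{a_1,\dots,a_r\in P_l \\ b_1,\dots,b_r\in P_l}}\idotsint \widehat{\eta_\delta}\!\left(\sum_{i=1}^r u_i-\sum_{j=1}^r v_j\right)\prod_{i=1}^r\psi_{a_i}(u_i)\,d\nu(u_i)\prod_{j=1}^r\psi_{b_j}(v_j)\,d\nu(v_j),
\end{equation*}
and every summand and integrand is non-negative. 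I then restrict attention to the diagonal tuples with $\sum_i a_i=\sum_j b_j$. A short curvature computation using $u_d=\sqrt{|u|^2-|u'|^2}$ together with $|u|-a\in[0,\delta^2]$ shows that any $u\in\supp(\psi_a)$ satisfies $|u'|\lesssim\delta$ and $|u_d-a|\lesssim\delta^2$. Hence for a diagonal tuple the argument $y$ of $\widehat{\eta_\delta}$ has $|y'|\lesssim r\delta$ and $|y_d|\lesssim r\delta^2$, so the Gaussian form of $\widehat{\eta_\delta}$ gives $\widehat{\eta_\delta}(y)\gtrsim \delta^{-(d+1)}$. Using $\int\psi_a\,d\nu=\widehat{\psi_a d\nu}(0)\approx \delta^{d-1}T_l^{-1}$ from Lemma~\ref{F psi_a size lemma}, each diagonal summand contributes at least $\gtrsim \delta^{-(d+1)}(\delta^{d-1}T_l^{-1})^{2r}$.

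Finally, I estimate the number $E_{2r}(P_l)$ of diagonal tuples. Using the GAP parametrization $a=1+\sum_{j=1}^l(2k_j-1)/N_j$ with $k_j\in\{1,\dots,s_j\}$, the diagonal equation reduces to $\sum_{j=1}^l D_j/N_j=0$ with $D_j\in\ZZ$ and $|D_j|\le rs_j$. Restricting to $D_j=0$ at every scale and applying the elementary Cauchy--Schwarz bound $\#\{(\vec k,\vec k')\in\{1,\dots,s_j\}^{2r}:\sum_i k_i=\sum_i k'_i\}\ge s_j^{2r-1}/r$ produces $E_{2r}(P_l)\gtrsim r^{-l}S_l^{2r-1}\ge(2r)^{-l}S_l^{2r-1}$. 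Collecting the estimates and substituting $\delta=N_l^{-1/2}$ gives $\|\widehat{f_l d\nu}\|_{2r}^{2r}\gtrsim (2r)^{-l}\delta^{-(d+1)+2r(d-1)}T_l^{-2r}S_l^{2r-1}$, which is exactly \eqref{fldnu 2r ineq 4}. The principal technical hurdle is the geometric/curvature step: $|u_d-a|$ must stay at scale $\delta^2$ rather than $\delta$, since it is the match between the annular thickness $\delta^2=N_l^{-1}$ and the arithmetic gap of $P_l$ that allows the Gaussian test function to detect the diagonal additive coincidences and recover the Knapp-sharp exponent.
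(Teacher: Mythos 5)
Your proof is correct, and its overall architecture is the paper's: weight $|\widehat{f_l d\nu}|^{2r}$ by a non-negative bump adapted to the dual Knapp box $R_{\delta}$, expand the $2r$-th power over tuples from $P_l$, discard the off-diagonal terms by positivity, run a Knapp-type estimate on each diagonal term (your bounds $|u'|\lesssim \delta$, $|u_d-a|\lesssim\delta^2$ on $\supp(\psi_a)$ and the contribution $\gtrsim \delta^{-(d+1)}(\delta^{d-1}T_l^{-1})^{2r}$ per diagonal tuple via Lemma \ref{F psi_a size lemma} match the paper's computation exactly), and finish with an additive count for the generalized arithmetic progression $P_l$. Where you genuinely diverge --- to your advantage --- is the positivity step. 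The paper takes $g=h\ast\tilde h$ with $\supp(g)\subseteq R_{\delta}$ and must then prove that each term $I(a_1,\ldots,a_{2r})$ is non-negative; this occupies all of Section \ref{proof of claim}, and it is precisely there that the hypothesis $r\beta>d$ is used, to guarantee that $\ast_{i=1}^{2r}m_i$ has a continuous non-negative density so that Fourier inversion can be applied pointwise. Your Gaussian weight short-circuits this: after a routine Fubini (legitimate since the weight is in $L^1$ and the measures $\psi_a\,d\nu$ are finite), each summand is an integral of the manifestly non-negative Gaussian kernel against non-negative measures, so positivity is immediate and the hypothesis $r\beta>d$ is never invoked; your argument would in fact prove the lemma for every $r\in\NN$. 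Your energy count is also organized slightly differently --- you restrict to digit-wise coincidences $D_j=0$ and apply Cauchy--Schwarz scale by scale, whereas the paper applies Cauchy--Schwarz to the representation function on the full sumset $P_l^{\oplus r}$ together with $|P_l^{\oplus r}|\le (2r)^l S_l$ --- but both yield $\gtrsim (2r)^{-l}S_l^{2r-1}$ (yours even gives the marginally better $r^{-l}$). One cosmetic point: you reuse the symbol $\eta$ for the Gaussian, while the paper reserves $\eta$ for the small constant in the definition of $R_{\delta}$; rename one of them if this is ever merged into the text.
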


\begin{proof}
For $x \in \RR^d$, we will write $x = (x_1,\ldots,x_d)$. Fix a small constant $\eta > 0$ to be specified later. It is important that $\eta$ does not depend on $l$. It is allowed to depend on $r$ and $d$. Recall that $\delta = N_{l}^{-1/2}$, and let $R_{\delta}$ be the box
$$
R_{\delta} = \cbr{ \xi \in \RR^d: |\xi_j| \leq \eta/\delta \hbox{ for }j=1,\dots,d-1,\  |\xi_d| \leq \eta/\delta^2 }.
$$

For every $f:\RR^d \rightarrow \CC$, we define $\tilde{f}$ by $\tilde{f}(x)=f(-x)$ for all $x \in \RR^d$.  Here and elsewhere $\lambda$ is the Lebesgue measure on $\RR^d$. Let $h:\RR^d \rightarrow [0,1]$ be a Schwartz function that is equal to 1 on $\frac{1}{4}R_{\delta}$ and has $\supp(h) \subseteq \frac{1}{2}R_{\delta}$. Define $g = h \ast \tilde{h}$. Then $g$ is a non-negative Schwartz function such that
\begin{flalign}
\label{g-eq-3}
\bullet\quad &\tilde{\widehat{g}} = |\tilde{\widehat{h}}|^2 \geq 0,& \\
\label{g-eq-4}
\bullet\quad &g \leq \lambda(\tfrac{1}{2}R_{\delta}),& \\
\label{g-eq-5}
\bullet\quad &\text{$g(\xi) \geq \lambda(\tfrac{1}{8}R_{\delta})$ for $\xi \in \tfrac{1}{8} R_{\delta}$},& \\
\label{g-eq-6}
\bullet\quad &\supp(g) \subseteq R_{\delta}.&
\end{flalign}

By \eq{g-eq-4},
\begin{align}\label{FL-1}
\|\widehat{f_l d\nu} \|^{2r}_{2r} = \int |\widehat{f_l d\nu}(\xi)|^{2r} d\xi \geq \frac{1}{\lambda(\tfrac{1}{2}R_{\delta})} \int g(\xi) |\widehat{f_l d\nu}(\xi)|^{2r} d\xi.
\end{align}
Note
$$
\widehat{f_l d\nu}(\xi) 
= \sum_{a \in P_l} e^{-2\pi i \xi \cdot a e_d} \int e^{-2\pi i \xi \cdot (x - a e_d)} \psi_a(x) d\nu(x),
$$
where $e_d$ is the standard unit vector $(0,\ldots,0,1) \in \RR^d$. Since $r \in \NN$, we have 
\begin{align*}
|\widehat{f_l d\nu}(\xi)|^{2r} &= \sum_{a_1,\ldots,a_{2r} \in P_l} e^{-2\pi i \xi \cdot  e_d(\sum_{i=1}^{r} a_i - \sum_{i=r+1}^{2r} a_i)} 
\quad \times \quad \\
&\prod_{i=1}^{r} \int e^{-2\pi i \xi \cdot (x - a_i e_d)} \psi_{a_i}(x) d\nu(x) \prod_{i=r+1}^{2r} \int e^{2\pi i \xi \cdot (x - a_i e_d)} \psi_{a_i}(x) d\nu(x).
\end{align*}
Substituting into \eq{FL-1} gives 
\begin{align}\label{FL-2}
\|\widehat{f_l d\nu} \|^{2r}_{2r} \geq \frac{1}{\lambda(\frac{1}{2}R_{\delta})} \sum_{a_1,\ldots,a_{2r} \in P_l} I(a_1,\ldots,a_{2r}),
\end{align}
where
\begin{gather*}
I(a_1,\ldots,a_{2r}) = \int g(\xi) e^{-2\pi i \xi \cdot  e_d(\sum_{i=1}^{r} a_i - \sum_{i=r+1}^{2r} a_i)} \quad \times \quad \\
\notag
\prod_{i=1}^{r} \int e^{-2\pi i \xi \cdot (x - a_i e_d)} \psi_{a_i}(x) d\nu(x) \prod_{i=r+1}^{2r} \int e^{2\pi i \xi \cdot (x - a_i e_d)} \psi_{a_i}(x) d\nu(x) d\xi
\end{gather*}
We claim that for any choice of $a_1,\ldots,a_{2r} \in P_l$, the integral $I(a_1,\ldots,a_{2r})$ is non-negative. We prove the claim in Section \ref{proof of claim}. Assume the claim for now. It then follows from \eq{FL-2} that
\begin{align}\label{FL-3}
\|\widehat{f_l d\nu} \|^{2r}_{2r} \geq \frac{1}{\lambda(\frac{1}{2}R_{\delta})} \sum_{\substack{a_1,\ldots,a_{2r} \in P_l \\ \sum_{i=1}^{r} a_i = \sum_{i=r+1}^{2r} a_i}} I(a_1,\ldots,a_{2r}).
\end{align}
Using that $\supp(g) \subseteq R_{\delta}$, we rewrite \eq{FL-3} as
\begin{align}\label{FL-3.5}
\|\widehat{f_l d\nu} \|^{2r}_{2r} \geq \frac{1}{\lambda(\frac{1}{2}R_{\delta})} \sum_{\substack{a_1,\ldots,a_{2r} \in P_l \\ \sum_{i=1}^{r} a_i = \sum_{i=r+1}^{2r} a_i}} \int_{R_{\delta}} g(\xi) P(\xi,a_1,\ldots,a_{2r}) d\xi,
\end{align}
where 
\begin{gather}\label{P product}
P(\xi,a_1,\ldots,a_{2r}) = \\
\notag
\prod_{i=1}^{r} \int e^{-2\pi i \xi \cdot (x - a_i e_d)} \psi_{a_i}(x) d\nu(x) \prod_{i=r+1}^{2r} \int e^{2\pi i \xi \cdot (x - a_i e_d)} \psi_{a_i}(x) d\nu(x).
\end{gather}

For a lower bound on $P(\cdot)$, we adapt the Knapp argument. Fix $a \in P_l$ for now, 
and let $x \in \supp(\psi_a)$ and $\xi \in R_{\delta}$. From the definition of $\psi_a$ we have
$|x_j| \leq 3 \delta$ for $j=1,\ldots,d-1$, and $|x_d - a| \leq 3 \delta^2$.
Hence
\begin{align*}
|\xi \cdot (x - a e_d)| \leq \sum_{j=1}^{d-1} |\xi_j||x_j| + |\xi_d||x_d - a| \leq 3d\eta,
\end{align*}
and, since $|e^{it} - 1| \leq |t|$ for $t \in \RR$,
\begin{align}
\label{exp - 1 bound}
|e^{\pm 2\pi i \xi \cdot (x - a e_d)} - 1| \leq 6 \pi d \eta.
\end{align}
Define $E(\xi,a)$ by 
$$
\int e^{-2\pi i \xi \cdot (x - a e_d)} \psi_a(x) d\nu(x) = \widehat{\psi_a d\nu}(0) + E(\xi,a).
$$
It follows from \eq{exp - 1 bound} that
\begin{align}
\label{E bound}
|E(\pm \xi,a)| \leq 6 \pi d \eta \widehat{\psi_{a} d\nu}(0).
\end{align}
Rewrite \eq{P product} as
\begin{align*}
P(\xi,a_1,\ldots,a_{2r}) = \prod_{i=1}^{r} \rbr{\widehat{\psi_{a_i} d\nu}(0) + E(\xi,a_i)} \prod_{i=1+r}^{2r} \rbr{\widehat{\psi_{a_i} d\nu}(0) + E(-\xi,a_i)}.
\end{align*}
Expanding this and using (\ref{E bound}), we see that
\begin{align*}
P(\xi,a_1,\ldots,a_{2r}) 
\geq \Big(1-(2^{2r} - 1) (6 \pi d \eta)\Big)  \prod_{i=1}^{2r} \widehat{\psi_{a_i} d\nu}(0)
\geq\frac{1}{2} \prod_{i=1}^{2r} \widehat{\psi_{a_i} d\nu}(0),
\end{align*}
assuming that $\eta$ is small enough depending on $r$ and $d$.

We now return to \eq{FL-3.5}. Applying our lower bound on $P(\cdot)$ and using that $g \geq 0$, we get
\begin{align}\label{FL-5}
\| \widehat{f_l d\nu} \|^{2r}_{2r} \geq \frac{1}{2 \lambda(\frac{1}{2}R_{\delta})} \sum_{\substack{a_1,\ldots,a_{2r} \in P_l \\ \sum_{i=1}^{r} a_i = \sum_{i=r+1}^{2r} a_i}} \prod_{i=1}^{2r} \widehat{\psi_{a_i} d\nu}(0) \int_{R_{\delta}} g(\xi) d\xi.
\end{align}
Then using \eq{g-eq-5}, we have
\begin{align}\label{FL-6}
\int_{R_{\delta}} g(\xi) d\xi \geq \int_{\frac{1}{8}R_{\delta}} g(\xi) d\xi \geq (\lambda(\tfrac{1}{8}R_{\delta}))^2.
\end{align}
We clearly have 
\begin{align}\label{FL-7}
\lambda(c R_{\delta}) = (c \eta)^{d} \delta^{-(d+1)} = (c \eta)^d N_{l}^{(d+1)/2}
\end{align}
for any $c > 0$. Applying \eq{FL-6} and \eq{FL-7} to \eq{FL-5} gives 
%
$$
\| \widehat{f_l d\nu} \|^{2r}_{2r} \geq  2^{-1 - 5d} \eta^d  N_{l}^{(d+1)/2} \sum_{\substack{a_1,\ldots,a_{2r} \in P_l \\ \sum_{i=1}^{r} a_i = \sum_{i=r+1}^{2r} a_i}} \prod_{i=1}^{2r} \widehat{\psi_{a_i} d\nu}(0).
$$
Now employing Lemma \ref{F psi_a size lemma} yields
\begin{align}\label{fldnu 2r ineq 3}
\| \widehat{f_l d\nu} \|^{2r}_{2r} \gtrsim N_{l}^{(d+1)/2} N_{l}^{-2r(d-1)/2} T_{l}^{-2r} M_{l,r},
\end{align}
where
$$
M_{l,r} = \abs{ \cbr{ (a_1,\ldots,a_{2r}) \in P_{l}^{2r} :  \sum_{i=1}^{r} a_i = \sum_{i=r+1}^{2r} a_i}   }. 
$$

We now work out a lower bound on $M_{l,r}$. Define
$$
P_{l}^{\oplus r} = \cbr{ \sum_{i=1}^{r} a_i : a_i \in P_l}
$$
and
$$
G(b) = \abs{ \cbr{ (a_1,\ldots,a_r) \in P_{l}^{\oplus r} : \sum_{i=1}^{r} a_i = b }}.
$$
Then
$$
M_{l,r} = \sum_{b \in P_{l}^{\oplus r}} G(b)^2.
$$
By the Cauchy-Schwarz inequality, 
\begin{align}\label{CS ineq}
\rbr{\sum_{b \in P_{l}^{\oplus r}} G(b)}^2 \leq M_{l,r} |P_{l}^{\oplus r}|.
\end{align}
To bound $M_{l,r}$, first note that
\begin{align}\label{sum g(b)}
\sum_{b \in P_{l}^{\oplus r}} G(b) = \abs{ P_l }^r = S_{l}^{r}. 
\end{align}
Next we estimate $|P_{l}^{\oplus r}|$. Each $a \in P_l$ is of the form
$$
a = 1 + \sum_{k=1}^{l} \frac{a^{(k)}}{N_k},
$$
where $a^{(k)} \in \cbr{1,3,\ldots,2s_k - 1}$ for $k=1,\ldots,l$. Therefore each $b \in P_{l}^{\oplus r}$ is of the form 
$$
b = r + \sum_{k=1}^{l} \frac{b^{(k)}}{N_k},
$$
where $b^{(k)} \in \cbr{r,r+1,\ldots,r(2s_k - 1)}$ for $k=1,\ldots,l$. Hence
\begin{align}\label{Ploplusr}
|P_{l}^{\oplus r}| \leq (2r)^l S_l.
\end{align}
Combining \eq{CS ineq}, \eq{sum g(b)}, and \eq{Ploplusr} gives
$$
M_{l,r} \geq \frac{S_{l}^{2r}}{ (2r)^l S_l} = (2r)^{-l} S_l^{2r-1}.
$$
Applying this lower bound for $M_{l,r}$ in \eq{fldnu 2r ineq 3} yields
(\ref{fldnu 2r ineq 4}).
\end{proof}

\section{The divergence of $\|\widehat{f_l d\nu} \|^{p}_{p} / \| f_l \|^{p}_{L^2(\nu)}$}\label{divergence-p}

\begin{lemma}\label{lemma-topnorm}
Let $r \in \NN$ be such that $r \beta > d$. For all $p$ with $1\leq p\leq 2r$ we have
\begin{equation}\label{e-topnorm}
\| \widehat{f_l d\nu} \|^{p}_{p} \gtrsim \frac{N_{l}^{-(p/2)(d - 1 + \beta_0) + (d+1)/2 - \alpha_0 + \beta_0/2}}{ (2r)^{l} n_{l+1}^{\alpha_0} \ln(400 (l+1)N_{l+1}) }
\end{equation}
\end{lemma}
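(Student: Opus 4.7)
The plan is to deduce \eqref{e-topnorm} from the $p=2r$ endpoint bound in Lemma \ref{prop-2rnorm} by standard log-convexity, using a trivial $L^\infty$ estimate.

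First, I would bound $\|\widehat{f_l d\nu}\|_\infty$ from above. Since $\psi_a \geq 0$ and $\nu$ is a positive measure, $\psi_a\, d\nu$ is a positive measure, so $|\widehat{\psi_a d\nu}(\xi)| \leq \widehat{\psi_a d\nu}(0)$ for every $\xi$. Summing over $a \in P_l$ (so $|P_l| = S_l$) and invoking Lemma \ref{F psi_a size lemma} gives
\begin{align*}
\|\widehat{f_l d\nu}\|_\infty \leq \sum_{a \in P_l} \widehat{\psi_a d\nu}(0) \lesssim S_l\, N_l^{-(d-1)/2}\, T_l^{-1}.
\end{align*}

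Next, by log-convexity of $L^p$ norms, for any $1 \leq p \leq 2r$,
\begin{align*}
\|\widehat{f_l d\nu}\|_{2r}^{2r} \leq \|\widehat{f_l d\nu}\|_p^p \, \|\widehat{f_l d\nu}\|_\infty^{2r-p},
\end{align*}
so rearranging yields $\|\widehat{f_l d\nu}\|_p^p \geq \|\widehat{f_l d\nu}\|_{2r}^{2r} \big/ \|\widehat{f_l d\nu}\|_\infty^{2r-p}$. Substituting the bound from Lemma \ref{prop-2rnorm} in the numerator and the $L^\infty$ bound above in the denominator and collecting exponents, one finds
\begin{align*}
\|\widehat{f_l d\nu}\|_p^p \gtrsim (2r)^{-l}\, N_l^{(d+1)/2 - (d-1)p/2}\, T_l^{-p}\, S_l^{p-1}.
\end{align*}

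Finally, I would translate this expression into the stated form using the parameter relations from Section \ref{numbers-chap2}. By \eqref{sh-eq-6}, $S_l \approx T_l\, N_l^{-\beta_0/2}$, hence $T_l^{-p} S_l^{p-1} \approx T_l^{-1}\, N_l^{-(p-1)\beta_0/2}$. Then \eqref{sh-eq-5} and $N_{l+1} = N_l\, n_{l+1}$ give $T_l^{-1} \approx N_l^{-\alpha_0}\, n_{l+1}^{-\alpha_0} / \ln(400(l+1)N_{l+1})$. Gathering the powers of $N_l$, the total exponent becomes $(d+1)/2 - (d-1)p/2 - (p-1)\beta_0/2 - \alpha_0 = -(p/2)(d-1+\beta_0) + (d+1)/2 - \alpha_0 + \beta_0/2$, which matches \eqref{e-topnorm}.

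There is no real analytic obstacle here: Lemma \ref{prop-2rnorm} supplies the nontrivial lower bound and positivity of $\psi_a\, d\nu$ supplies an essentially sharp $L^\infty$ estimate, so the main work is the bookkeeping of exponents and the correct use of \eqref{sh-eq-5}--\eqref{sh-eq-6} at the indices $l$ and $l+1$.
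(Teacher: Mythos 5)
Your proposal is correct and follows essentially the same route as the paper: the paper also reduces $1\le p<2r$ to the $p=2r$ case of Lemma \ref{prop-2rnorm} via the trivial bound $\|\widehat{f_l d\nu}\|_\infty\le\widehat{f_l d\nu}(0)=\int f_l\,d\nu$ (which is exactly your positivity argument), and then uses Lemma \ref{F psi_a size lemma} together with \eqref{sh-eq-5}--\eqref{sh-eq-6} for the exponent bookkeeping. The only difference is the order in which the parameter relations are substituted, and your final exponent matches \eqref{e-topnorm}.
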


\begin{proof}
Lemma \ref{prop-2rnorm} says
\begin{align*}
\|\widehat{f_l d\nu}\|^{2r}_{2r} \gtrsim (2r)^{-l} N_{l}^{(d+1)/2} N_{l}^{-r(d-1)} T_{l}^{-2r} S_l^{2r-1}.
\end{align*}
Applying \eq{sh-eq-5} and \eq{sh-eq-6} gives 
\begin{align}\label{num}
\|\widehat{f_l d\nu}\|^{2r}_{2r} 
&\gtrsim   \frac{N_{l}^{-r(d - 1 + \beta_0) + (d+1)/2 - \alpha_0 + \beta_0/2}}{ (2r)^{l} n_{l+1}^{\alpha_0} \ln(400 (l+1)N_{l+1}) }.
\end{align}
which is (\ref{e-topnorm}) with $p=2r$. Assume now that $1\leq p<2r$. Then
\begin{align*}
\| \widehat{f_l d\nu} \|^{2r}_{2r} 
= \int |\widehat{f_l d\nu}(\xi)|^{2r-p} |\widehat{f_l d\nu}(\xi)|^{p} d\xi 
\leq \rbr{\int f_l(x) d\nu(x)}^{2r-p}    \| \widehat{f_l d\nu} \|^{p}_{p}.
\end{align*}
By Lemma \ref{F psi_a size lemma}, the fact $|P_l| = S_l$, and \eq{sh-eq-6}, we have
\begin{align*}
\int f_l(x) d\nu(x) = \sum_{a \in P_l} \int \psi_a(x) d\nu(x) \approx N_{l}^{-(d-1)/2} T_{l}^{-1} S_{l} \approx N_l^{-(d-1 + \beta_0)/2},
\end{align*}
so that
$$
\| \widehat{f_l d\nu} \|^{p}_{p} \gtrsim N_l^{-(2r-p)(d-1 + \beta_0)/2}\| \widehat{f_l d\nu} \|^{2r}_{2r} 
$$
This together with (\ref{num}) yields (\ref{e-topnorm}).
\end{proof}

\begin{lemma}\label{lemma-bottomnorm}
For $1\leq p<\infty$, we have 
\begin{align*}
\| f_l \|^{p}_{L^2(\nu)} \approx N_{l}^{-\frac{p}{4}( d-1+ \beta_0) }.
\end{align*}
\end{lemma}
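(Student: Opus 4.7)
The plan is to reduce the $L^2(\nu)$ norm of $f_l = \sum_{a \in P_l} \psi_a$ to a sum of single-bump contributions using the disjointness of supports, then apply the already-established volume estimate \eqref{old-cap-1} together with the parameter relation \eqref{sh-eq-6}.

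First, I would observe that the disjointness argument in the proof of Lemma \ref{F psi_a size lemma} (which uses \eqref{seq-sets-eq-6}) applies verbatim here: for distinct $a, a' \in P_l$, the supports $N_{\delta^2/2}(C_{a,\delta^2,\delta,e_d})$ and $N_{\delta^2/2}(C_{a',\delta^2,\delta,e_d})$ lie in disjoint $\delta^2/2$-neighbourhoods of the annuli $D_{a,\delta^2}$ and $D_{a',\delta^2}$, which are themselves separated on the line by the gap condition \eqref{seq-sets-eq-6}. Consequently the products $\psi_a \psi_{a'}$ vanish identically on $\supp(\nu)$, so
\[
\|f_l\|_{L^2(\nu)}^2 = \sum_{a \in P_l} \int \psi_a^2 \, d\nu.
\]

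Next, since $0 \leq \psi_a \leq 1$, $\psi_a \equiv 1$ on $C_{a,\delta^2,\delta,e_d}$, and $\supp(\psi_a) \subseteq N_{\delta^2/2}(C_{a,\delta^2,\delta,e_d})$, I would sandwich each term as
\[
\nu(C_{a,\delta^2,\delta,e_d}) \;\leq\; \int \psi_a^2 \, d\nu \;\leq\; \nu(C_{a,\delta^2,2\delta,e_d}),
\]
where the upper bound uses exactly the same inclusion $N_{\delta^2/2}(C_{a,\delta^2,\delta,e_d}) \cap \supp(\nu) \subseteq C_{a,\delta^2,2\delta,e_d}$ that appears in the proof of Lemma \ref{F psi_a size lemma}. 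Both bounding quantities are $\approx \delta^{d-1} T_l^{-1} = N_l^{-(d-1)/2} T_l^{-1}$ by \eqref{old-cap-1}.

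Finally, summing over the $|P_l| = S_l$ terms and invoking \eqref{sh-eq-6} gives
\[
\|f_l\|_{L^2(\nu)}^2 \;\approx\; S_l \cdot N_l^{-(d-1)/2} T_l^{-1} \;=\; N_l^{-(d-1)/2} \cdot \frac{S_l}{T_l} \;\approx\; N_l^{-(d-1+\beta_0)/2},
\]
and raising to the $p/2$ power yields the claimed estimate. There is no serious obstacle here: the argument is essentially bookkeeping, the only subtlety being the disjointness of supports along $\supp(\nu)$, which is precisely what \eqref{seq-sets-eq-6} was designed to provide and which has already been established in Section \ref{functions}.
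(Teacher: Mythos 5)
Your proposal is correct and follows essentially the same route as the paper: disjointness of the supports of the $\psi_a$ reduces $\|f_l\|_{L^2(\nu)}^2$ to $\sum_{a\in P_l}\int\psi_a^2\,d\nu$, each term is evaluated by the same sandwich argument as in Lemma \ref{F psi_a size lemma} via \eqref{old-cap-1}, and the sum is computed using $|P_l|=S_l$ and \eqref{sh-eq-6}. The paper merely cites the single-bump estimate as ``analogous to the proof of Lemma \ref{F psi_a size lemma}'' where you spell it out; there is no substantive difference.
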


\begin{proof}
We have
\begin{align*}
\| f_l \|^{2}_{L^2(\nu)} = \int \rbr{\sum_{a \in P_l}  \psi_a(x)}^2 d\nu(x) = \sum_{a \in P_l} \int (\psi_a(x))^2 d\nu(x)
\end{align*}
because the $\psi_a$ have disjoint supports. By an argument analogous to the proof of Lemma \ref{F psi_a size lemma}, 
$$
\int (\psi_a(x))^2 d\nu(x) \approx N_{l}^{-(d-1)/2} T_{l}^{-1}.
$$
Using this, the fact $|P_l| = S_l$, and \eq{sh-eq-6} completes the proof.
\end{proof}

We now complete the proof of Theorem \ref{main-thm}. Let $1 \leq p < \infty$. Choose $r \in \NN$ such that $r \beta > d$  
and $2r\geq p$. By Lemmas \ref{lemma-topnorm} and \ref{lemma-bottomnorm}, we have
$$
\frac{\| \widehat{f_l d\nu} \|^{p}_{p}}{\| f_l \|^{p}_{L^2(\nu)}} \gtrsim \frac{N_{l}^{-(p/4)(d - 1 + \beta_0) + (d+1)/2 - \alpha_0 + \beta_0/2}}{ (2r)^{l} n_{l+1}^{\alpha_0} \ln(400 (l+1)N_{l+1}) }
$$
Because of \eq{sh-eq-3} and \eq{sh-eq-4}, the exponent of $N_l$ determines whether the right-hand side diverges. 
Specifically, we have divergence if and only if 
$-(p/4)(d - 1 + \beta_0) + (d+1)/2 - \alpha_0 + \beta_0/2 > 0.$
Recalling that $\beta_0=\beta-(d-1)$ and $\alpha_0=\alpha-(d-1)$, this translates after a little bit of algebra to
$p < (4d - 4\alpha + 2\beta)/\beta$, as required by the statement of Theorem \ref{main-thm}.

\section{Proof of the claim}\label{proof of claim}

In the proof of Lemma \ref{prop-2rnorm}, we claimed that the integral
\begin{gather*}
I(a_1,\ldots,a_{2r})
= \int g(\xi) e^{-2\pi i \xi \cdot  e_d(\sum_{i=1}^{r} a_i - \sum_{i=r+1}^{2r} a_i)} \quad \times \quad \\ 
\prod_{i=1}^{r} \int e^{-2\pi i \xi \cdot (x - a_i e_d)} \psi_{a_i}(x) d\nu(x) \prod_{i=r+1}^{2r} \int e^{2\pi i \xi \cdot (x - a_i e_d)} \psi_{a_i}(x) d\nu(x) d\xi.
\end{gather*}
is non-negative for any choice of $a_1,\ldots,a_{2r} \in P_l$. In this section, we prove this claim.

Recall the hypothesis of Lemma \ref{prop-2rnorm} is that $r \in \NN$ is such that $r \beta > d$. For $i = 1, \ldots, r$, write $m_{i}$ for the measure defined by
$$
\int f(x) dm_{i}= \int f(x-a_i e_d) \psi_{a_i}(x) d\nu(x)  \quad \forall f \in L^1(\nu).
$$
For $i = 1+r, \ldots, 2r$, write $m_i$ for the measure defined by
$$
\int f(x) dm_{i} = \int f(a_i e_d - x) \psi_{a_i}(x) d\nu(x) \quad \forall f \in L^1(\nu).
$$
By \eq{F decay psi nu}, for all $1 \leq i \leq 2r$ and $\xi \in \RR^d$, we have 
$$
|\widehat{m_i}(\xi)| 
= |e^{\pm 2\pi i \xi \cdot a_i e_d} \widehat{ \psi_{a_i} d\nu }(\pm \xi)|
\lesssim (1+|\xi|)^{-\beta/2}.
$$
The implied constant depends on $a_i$ and $l$, but this dependence is not important for our argument. 
It follows that
$$
\abs{ \mathcal{F} [\ast_{i=1}^{2r} m_i](\xi) } = \abs{ \prod_{i=1}^{2r} \mathcal{F}[m_i](\xi) } \lesssim (1+|\xi|)^{-r \beta}.  
$$
Since $r \beta > d$, it follows that $\mathcal{F} [\ast_{i=1}^{2r} m_i] \in L^1(\RR^d)$. Therefore $\ast_{i=1}^{2r} m_i$ has a continuous, non-negative, $L^1(\RR^d)$ density function $\phi$. 
Rewrite $I(a_1,\ldots,a_{2r})$ as
\begin{align*}
I(a_1,\ldots,a_{2r}) 
&= \int e^{-2\pi i \xi \cdot e_d (\sum_{i=1}^{r} a_i  - \sum_{i=r+1}^{2r} a_i )} \prod_{i=1}^{2r} \mathcal{F}[m_i](\xi) g(\xi) d\xi \\
&= \int e^{-2\pi i \xi \cdot e_d (\sum_{i=1}^{r} a_i  - \sum_{i=r+1}^{2r} a_i )} \mathcal{F}[\ast_{i=1}^{2r} m_i] (\xi) g(\xi) d\xi \\
&= \int e^{-2\pi i \xi \cdot e_d (\sum_{i=1}^{r} a_i  - \sum_{i=r+1}^{2r} a_i )} \mathcal{F}[\phi] (\xi) g(\xi) d\xi.
\end{align*}
Since $g$ is a Schwartz function, we have $g = \mathcal{F}[\tilde{\widehat{g}}]$. 
Therefore
$$
I(a_1,\ldots,a_{2r}) = \int e^{-2\pi i \xi \cdot e_d (\sum_{i=1}^{r} a_i - \sum_{i=r+1}^{2r} a_i )} \mathcal{F}[\phi \ast \tilde{\widehat{g}}] (\xi) d\xi.
$$
Since $\phi$ and $\tilde{\widehat{g}}$ are in $L^1(\RR^d)$, Young's convolution inequality implies $\phi \ast \tilde{\widehat{g}}$ is in $L^1(\RR^d)$. Moreover, since $\mathcal{F}[\phi] = \mathcal{F}[\ast_{i=1}^{2r} m_i] \in L^1(\RR^d)$ and $g$ is bounded, $\mathcal{F}[\phi \ast \tilde{\widehat{g}}] = \mathcal{F}[\phi] g$ is also in $L^1(\RR^d)$. Therefore we can apply the Fourier inversion theorem to obtain
$$
I(a_1,\ldots,a_{2r}) = (\phi \ast \tilde{\widehat{g}})\rbr{ -e_d \cdot \rbr{\sum_{i=1}^{r} a_i - \sum_{i=r+1}^{2r} a_i  }}.
$$
As $\phi$ and $\tilde{\widehat{g}}$ are both non-negative, we conclude that $I(a_1,\ldots,a_{2r})$ is non-negative, as claimed.


\begin{comment}

\affiliationone{%
   Kyle Hambrook \\
   Department of Mathematics, University of British Columbia, Vancouver, BC, V6T1Z2  \\
   Canada
   \email{hambrook@math.ubc.ca}}
\affiliationtwo{%
   Izabella {\L}aba\\
   Department of Mathematics, University of British Columbia, Vancouver, BC, V6T1Z2\\
   Canada
   \email{ilaba@math.ubc.ca}}
\affiliationthree{%
   Current address:\\
   Department of Mathematics, University of Rochester, Rochester, NY, 14627 \\
   USA
   \email{khambroo@ur.rochester.edu}}
%
\affiliationfour{~} 
%
\end{comment}

\noindent {\sc Kyle Hambrook} \newline
Department of Mathematics, University of Rochester, Rochester, NY, 14627 USA \newline
Department of Mathematics, University of British Columbia, Vancouver, BC, V6T1Z2 Canada \newline
\texttt{khambroo@ur.rochester.edu, hambrook@math.ubc.ca}

\noindent {\sc Izabella {\L}aba} \newline
Department of Mathematics, University of British Columbia, Vancouver, BC, V6T1Z2 Canada \newline
\texttt{ilaba@math.ubc.ca}
\end{document}